\newcommand{\dd}{\text{d}}
\def\eps{\varepsilon}
\def\E{\mathbb{E}}
\def\P{\mathbb{P}}
\def\R{\mathbb{R}}
\def\tod{\to_{\textup{d}}}
\newcommand{\Markov}[2]{\underset{#1}{\overset{#2}{\rightleftharpoons}}}
\def\ts{t_{0}}
\def\tc{t_{\textup{c}}}
\def\xc{x_{\textup{c}}}
\def\taut{\widetilde{\tau}}
\def\Tt{\widetilde{T}}
\def\dist{\textup{d}}
\def\O{\mathcal{O}}
\def\oned{\textup{1d}}
\def\twod{\textup{2d}}
\def\threed{\textup{3d}}
\DeclarePairedDelimiter\floor{\lfloor}{\rfloor}
\newtheorem{theorem}{Theorem}
\newtheorem{proposition}[theorem]{Proposition}
\theoremstyle{plain}
\theoremstyle{remark}
\newtheorem{remark}[theorem]{Remark}
\theoremstyle{definition}
\newtheorem*{definition*}{Definition}
\begin{document}

% Use the \preprint command to place your local institutional report
% number in the upper righthand corner of the title page in preprint mode.
% Multiple \preprint commands are allowed.
% Use the 'preprintnumbers' class option to override journal defaults
% to display numbers if necessary
%\preprint{}

%Title of paper
\title{Extreme first passage times of piecewise deterministic Markov processes}

% repeat the \author .. \affiliation  etc. as needed
% \email, \thanks, \homepage, \altaffiliation all apply to the current
% author. Explanatory text should go in the []'s, actual e-mail
% address or url should go in the {}'s for \email and \homepage.
% Please use the appropriate macro foreach each type of information

% \affiliation command applies to all authors since the last
% \affiliation command. The \affiliation command should follow the
% other information
% \affiliation can be followed by \email, \homepage, \thanks as well.
\author{Sean D. Lawley\thanks{Department of Mathematics, University of Utah, Salt Lake City, UT 84112 USA (\texttt{lawley@math.utah.edu}). The author was supported by the National Science Foundation (Grant Nos.\ DMS-1814832 and DMS-1148230).}
}
\date{\today}
\maketitle

\begin{abstract}
The time it takes the fastest searcher out of $N\gg1$ searchers to find a target determines the timescale of many physical, chemical, and biological processes. This time is called an extreme first passage time (FPT) and is typically much faster than the FPT of a single searcher. Extreme FPTs of diffusion have been studied for decades, but little is known for other types of stochastic processes. In this paper, we study the distribution of extreme FPTs of piecewise deterministic Markov processes (PDMPs). PDMPs are a broad class of stochastic processes that evolve deterministically between random events. Using classical extreme value theory, we prove general theorems which yield the distribution and moments of extreme FPTs in the limit of many searchers based on the short time distribution of the FPT of a single searcher. We then apply these theorems to some canonical PDMPs, including run and tumble searchers in one, two, and three space dimensions. We discuss our results in the context of some biological systems and show how our approach accounts for an unphysical property of diffusion which can be problematic for extreme statistics.
\end{abstract}

%%%%%%%%%%%%%%%%%%%%%%%%%%%%%%%%%%%
\section{Introduction}

The first time a random searcher finds a target is called a first passage time (FPT) and is commonly used to understand timescales in many areas of physics, chemistry, and biology \cite{redner2001}. The majority of prior work on FPTs analyzes the first time a given single searcher finds a target. However, it is being increasingly realized that in many applications the important timescale is not how long it takes a given single searcher to find a target, but rather how long it takes the fastest searcher out of $N\gg1$ searchers to find a target \cite{schuss2019, coombs2019, redner2019, sokolov2019, rusakov2019, martyushev2019, tamm2019, basnayake2019c}.

One particularly illustrative example is in human fertilization \cite{meerson2015}. Why do roughly $N=3\times10^{8}$ sperms cells search for the oocyte when only one sperm cell is necessary for fertilization? It is believed that a single sperm cell searching for the oocyte would be far too slow, and thus many sperm cells are required to accelerate the search process \cite{meerson2015, reynaud2015, schuss2019}. Remarkably, $N=3\times10^{8}$ sperm cells seems to be necessary, as a reduction in the number of sperm cells by only a factor of four may cause infertility \cite{bensdorp2007}. 

To setup the problem more precisely, let $\tau_{1},\dots,\tau_{N}$ be $N$ independent and identically distributed (iid) FPTs. Prior work has focused on a single FPT, $\tau_{1}$, whereas the more relevant timescale in many applications is the minimum FPT, 
\begin{align}\label{TN}
T_{N}
:=\min\{\tau_{1},\dots,\tau_{N}\}.
\end{align}
This minimum or fastest FPT, $T_{N}$, is called an \emph{extreme value} \cite{colesbook}, and it is typically much faster than $\tau_{1}$ if $N\gg1$.

The first motivation for the present work deals with a well-known \cite{keller2004} unphysical property of diffusion that is particularly problematic in extreme value theory. Most of the prior work on extreme FPTs deals with diffusive searchers, meaning $\tau_{1},\dots,\tau_{N}$ are the FPTs of $N$ independent diffusive Brownian searchers to find some target. If the searchers have diffusivity $D>0$, and the target is distance $L>0$ from the initial searcher locations, then the mean of this extreme FPT satisfies \cite{weiss1983, lawley2019uni}
\begin{align}\label{vanish}
\E[T_{N}]
\sim\frac{L^{2}}{4D\ln N}\quad\text{as }N\to\infty.
\end{align}
Notice that this time vanishes as $N\to\infty$. However, diffusion approximates a random walk with steps of finite speed. In particular, if a searcher follows a random walk with speed $v>0$, then it could never find a target that is distance $L$ away faster than time $\ts:=L/v>0$. Hence,
\begin{align}\label{lb}
T_{N}\ge \ts>0,
\end{align}
which contradicts \eqref{vanish}.

The discrepancy between \eqref{vanish} and \eqref{lb} stems from the infinite speed of propagation of solutions to the diffusion equation \cite{keller2004, kuske1997, joseph1989}. To illustrate, consider a one-dimensional (1d) searcher that moves at constant speed $v>0$ but switches direction (either to the left or to the right) at rate $\lambda>0$. This process is called a 1d run and tumble \cite{malakar2018, dhar2019}. If the searcher starts at the origin, then the probability density for its position, $p(x,t)$, satisfies
\begin{align}\label{zero}
p(x,t)=0,\quad |x|>vt,
\end{align}
since it cannot move more than distance $vt$ in time $t$. Now, it is common to approximate the density of a run and tumble process by solutions to the diffusion equation \cite{othmer2000},
\begin{align}\label{FPE}
\begin{split}
\frac{\partial}{\partial t}p
&=D\frac{\partial^{2}}{\partial x^{2}}p,%,\quad x\in\R,\,t>0,\\
%p
%&=\delta(x),\quad x\in\R,\, t=0.
\end{split}
\end{align}
with $D=v^{2}/(2\lambda)$. However, solutions to \eqref{FPE} are strictly positive everywhere if $t>0$,
\begin{align*}
p(x,t)>0,\quad x\in\R,\,t>0,
\end{align*}
which violates \eqref{zero}. The problem is that \eqref{FPE} is a valid approximation for a 1d run and tumble process if $t\gg1/\lambda$ and $|x|=\O(\sqrt{(v^{2}/\lambda) t})$, whereas \eqref{zero} concerns values of $x$ outside the range of validity (see the Appendix for a review of this calculation).

The discrepancy between random walks at finite speed and diffusion at infinite speed can often be safely ignored in many applications, since the discrepancy occurs in the tails of the distribution. However, extreme FPTs depend precisely on these tails. In contrast to diffusion, a broad class of stochastic processes which move at finite speed and thus avoid this issue are piecewise deterministic Markov processes (PDMPs). 

PDMPs are stochastic processes that evolve deterministically between jumps of a Markov chain \cite{davis1984}. The word ``hybrid'' is often used in describing PDMPs since they consist of a continuous component $\{X(t)\}_{t\ge0}$ and a discrete component $\{J(t)\}_{t\ge0}$. The discrete component $J$ is a Markov jump process, and each element of its state space corresponds to some continuous dynamics for $X$. In between jumps of $J$, the continuous component $X$ evolves according to the dynamics associated with the current state of $J$. When $J$ jumps, $X$ switches to following the dynamics associated with the new state of $J$. Typically, $X$ takes values in $\R^{d}$ and follows an ordinary differential equation (ODE) in between jumps of $J$,
\begin{align}\label{gen}
\tfrac{\dd}{\dd t}X(t)=F_{J(t)}\big(X(t)\big)\in\R^{d},
\end{align}
where $\{F_{j}(x)\}_{j}$ is a given set of vector fields. Putting the 1d run and tumble described above in the framework of \eqref{gen}, $J(t)\in\{0,1\}$ jumps at rate $\lambda$, $d=1$, and the vector fields are simply $F_{0}(x)=-v$, $F_{1}(x)=v$.

In this paper, we study extreme FPTs for PDMPs. Specifically, we determine the distribution of $T_{N}$ in \eqref{TN} for large $N$ where the individual FPTs, $\tau_{1},\dots,\tau_{N}$, are iid realizations of the first time $X(t)$ in \eqref{gen} reaches some target $U_{\text{target}}\subset\R^{d}$,
\begin{align*}
\tau
:=\inf\{t>0:X(t)\in U_{\text{target}}\}.
\end{align*}
Since PDMPs move at finite speed (assuming $\sup_{j}\|F_{j}(x)\|<\infty$), we avoid the contradiction between \eqref{vanish} and \eqref{lb} which occurs for diffusion.

In addition, this work is motivated by the many applications of PDMPs in biology \cite{rudnicki17,cloez2017,bressloffbook,bressloff2017}, physics \cite{bena06, buceta02, horsthemke1984, doering1985, masoliver1986, doering1987}, engineering \cite{goebel04,hespanha14,teel15}, and finance \cite{yinbook}. In terms of biology, the swimming motion of bacteria is often modeled by a run and tumble process in two dimensions (2d) or three dimensions (3d) \cite{berg1993}. As in 1d, a run and tumble in 2d or 3d moves at constant speed (a ``run'') until a random ``tumbling'' time, at which point it chooses a new random direction and starts a new run until the next random tumbling time, and so on. Run and tumble processes in 1d have been used (i) in the Dogterom-Leibler model of microtubule catastrophes \cite{dogterom1993} and (ii) to model intracellular transport by molecular motors on a microtubule \cite{bressloff13}. Similar models have also been used to study sperm cells searching for an egg \cite{yang2016} and intermittent search strategies, in which a searcher switches between a slow search phase and a fast motile phase \cite{oshanin2009,benichou2011rev}. PDMPs are also used in stochastic gene expression \cite{smiley2010, lin2016}, biochemical reactions \cite{levien2017}, and neuroscience \cite{anderson2015}, where the continuous component $X$ represents the concentration(s) of abundant molecular species and the jump component $J$ represents either molecules with low copy number or some environmental state. PDMPs have also been used in ecology to understand population dynamics in a changing environment \cite{hening2018}. In addition, PDMPs have been studied for their interesting and sometimes counterintuitive mathematical features \cite{benaim12,hasler13,lawley14ode, bakhtin15, cloez15,lawley2018blowup, PB9, bakhtin2018}. Due to the diversity of the groups studying PDMPs, they are given several names in the literature, including stochastic hybrid systems, randomly switching dynamical systems, dichotomous Markov noise processes, velocity jump processes, and random evolutions.

The rest of the paper is organized as follows. In section~\ref{section main}, we summarize our main results. In section~\ref{math}, we give general theorems which yield the approximate distribution of extreme FPTs based on the short time asymptotic behavior of a single FPT. In sections~\ref{section 1d}-\ref{section 4B}, we apply these general results to four canonical PDMPs. We conclude by discussing related work and some biological applications. We collect the proofs and various technical details in the Appendix.

%%%%%%%%%%%%%%%%%%%%%%%%%%%%%%%%%%%
\begin{figure}[t]%[htp]
\centering
\includegraphics[width=1\linewidth]{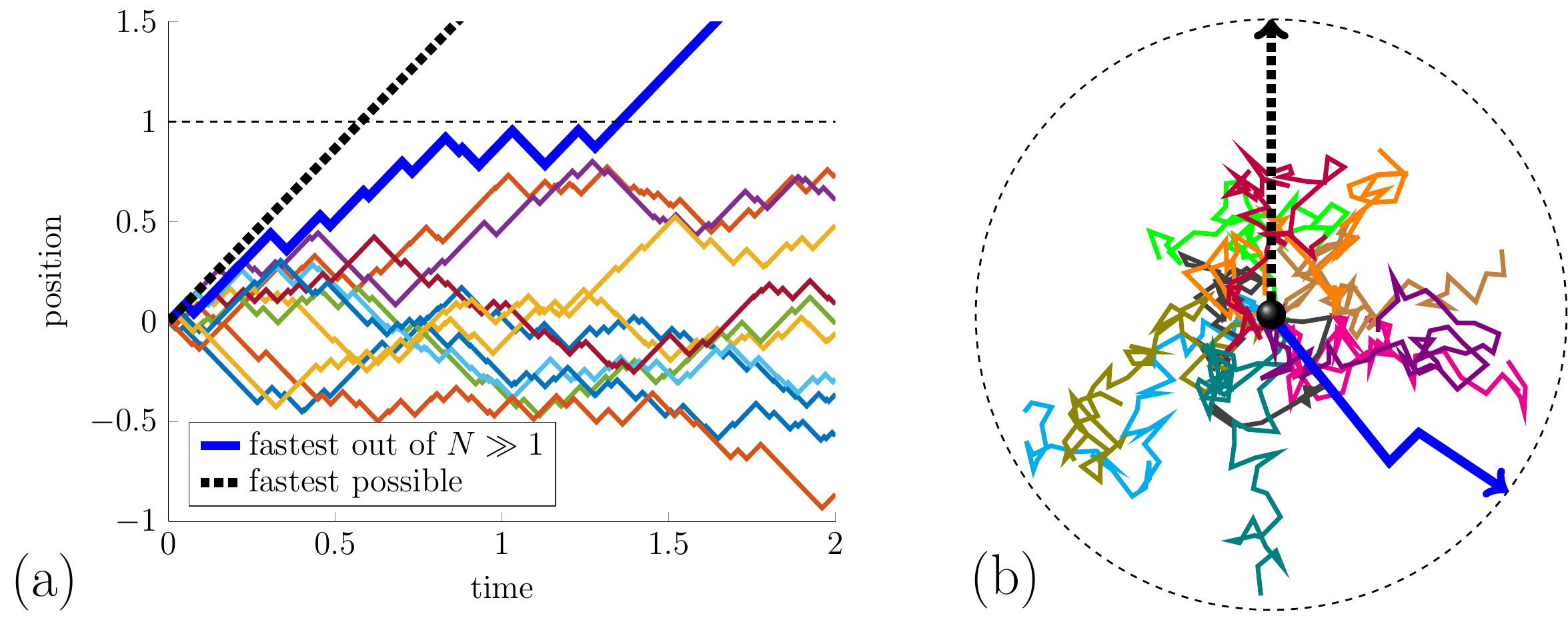}
\caption{(a) Run and tumble in 1d. (b) Run and tumble in 2d. In both (a) and (b), the black dashed curve illustrates the fastest possible trajectory, the thick blue curve illustrates the fastest trajectory out of $N\gg1$ trajectories, and the thin curves illustrate the $N-1$ slower trajectories.}
\label{figschem}
\end{figure}
%%%%%%%%%%%%%%%%%%%%%%%%%%%%%%%%%%%

%%%%%%%%%%%%%%%%%%%%%%%%%%%%%%%%%%%
\section{Main results}\label{section main}

Let $\{\tau_{n}\}_{n\ge1}$ be an iid sequence of realizations of a some FPT $\tau$, and assume that there exists $\ts>0$ and $q\in[0,1)$ so that
\begin{align}
\begin{split}\label{tq}
\P(\tau<\ts)
&=0,\\
\P(\tau=\ts)
&=q\in[0,1),\\
\P(\tau<\ts(1+\eps))
&>0\quad\text{for every }\eps>0.
\end{split}
\end{align}
The existence of such a $\ts$ is typical for FPTs of PDMPs, and it represents the fastest possible FPT. For example, in the case that $X$ is a run and tumble in 1d, 2d, or 3d with speed $v>0$, and $\tau$ is the first time the process escapes a ball of radius $L>0$,
\begin{align*}
\tau:=\inf\{t>0:\|X(t)\|>L\},
\end{align*}
then $\ts=L/v$. See Figure~\ref{figschem} for an illustration.

Define the fastest FPT $T_{N}$ as in \eqref{TN}. Since $\{\tau_{n}\}_{n\ge1}$ are iid, \eqref{tq} implies that
\begin{align}\label{qimp}
\begin{split}
\P(T_{N}<\ts)
&=0,\\
\P(T_{N}=\ts)
&=1-(1-q)^{N},\quad q\in[0,1).
\end{split}
\end{align}
Further, it follows from \eqref{tq} that $T_{N}$ converges almost surely to $\ts$ as $N\to\infty$ (even if $q=0$). To understand the distribution of $T_{N}$ for large $N$, we therefore need to understand the rate and distribution of the stochastic convergence of $T_{N}$ to $\ts$ as $N\to\infty$. To do this, we need information on the distribution of a single FPT $\tau$ around $\ts$. In particular, assume that there exists $\alpha>0$ and $p>0$ so that either
\begin{align}
\P(\ts<\tau<\ts(1+\eps))
&=(1-q)\alpha \eps^{p}+o(\eps^{p}),\label{short0}\\
\text{or}\quad\P(\ts<\tau<\ts(1+\eps))
&
=(1-q)\alpha\ln(1/\eps)\eps^{p}+o(\eps^{p}\ln(1/\eps)),\label{shortlog0}
\end{align}
as $\eps\to0+$, where $f(\eps)=o(g(\eps))$ means $\lim_{\eps\to0+}f/g=0$.

Under assumptions \eqref{tq} and \eqref{short0}-\eqref{shortlog0}, we prove below that (Theorem~\ref{main}) 
\begin{align}\label{coin}
T_{N}
=_{\dist}\ts(1+\xi_{N}a_{N}\Sigma_{N}),
\end{align}
where $=_{\dist}$ denotes equality in distribution, $\xi_{N}\in\{0,1\}$ is a Bernoulli random variable with
\begin{align}\label{coin2}
\P(\xi_{N}=1)=(1-q)^{N},
\end{align}
the scaling constant $a_{N}>0$ is
\begin{align}
a_{N}
:=\begin{cases}
(\alpha N)^{-1/p} & \text{if \eqref{short0} holds},\\
(\alpha N\ln(N)/p)^{-1/p} & \text{if \eqref{shortlog0} holds},
\end{cases}
\end{align}
and $\Sigma_{N}>0$ is a random variable independent of $\xi_{N}$ that converges in distribution to a Weibull random variable with unit scale and shape $p$,
\begin{align}\label{coin3}
\Sigma_{N}
\to_{\dist}
\textup{Weibull}(1,p)\quad\text{as }N\to\infty.
\end{align}

In words, \eqref{coin} means that the distribution of $T_{N}$ is given by (i) flip a coin to see if $T_{N}=\ts$ or $T_{N}>\ts$ (corresponding to $\xi_{N}=0$ or $\xi_{N}=1$) and (ii) if $T_{N}>\ts$, then $T_{N}=\ts(1+a_{N}\Sigma_{N})$ where $\Sigma_{N}$ is approximately a Weibull random variable for $N\gg1$. Put another way,
\begin{align*}
T_{N}
=_{\dist}
\begin{cases}
\ts & \text{with probability }1-(1-q)^{N}\\
\ts(1+a_{N}\Sigma_{N}) & \text{with probability }(1-q)^{N},
\end{cases}
\end{align*}
where $\Sigma_{N}>0$ is approximately Weibull for large $N$.

Therefore, \eqref{coin}-\eqref{coin3} give the full distribution of $T_{N}$ for large $N$. For example, \eqref{coin}-\eqref{coin3} yield all the moments of $T_{N}$ for large $N$. To illustrate, we prove that (Theorem~\ref{moments})
\begin{align*}
\E[T_{N}]
&= \ts+(1-q)^{N}\ts a_{N}\Gamma(1+1/p)+\textup{h.o.t.},\\
\textup{Variance}(T_{N})
&=({\ts a_{N}})^{2}(1-q)^{N}\Big[\Gamma(1+2/p)-(1-q)^{N}\big(\Gamma(1+1/p)\big)^{2}\Big]+\textup{h.o.t.},
\end{align*}
where $\Gamma(\cdot)$ is the Gamma function and h.o.t.\ refers to terms which are higher order for large $N$. We prove analogous results (Theorems~\ref{kth} and \ref{kth moment}) about the full distribution and moments of the $k$th fastest FPT,
\begin{align*}
T_{k,N}
:=\min\big\{\{\tau_{1},\dots,\tau_{N}\}\backslash\cup_{j=1}^{k-1}\{T_{j,N}\}\big\},\quad k\in\{1,\dots,N\},
\end{align*}
where $T_{1,N}:=T_{N}$.

We apply these general theorems to some specific PDMPs in sections~\ref{section 1d}-\ref{section 4B}. To illustrate briefly, consider run and tumble processes in 1d, 2d, and 3d with speed $v>0$ and tumbling rate $\lambda>0$ (for simplicity, upon tumbling in 2d or 3d, assume the new angle is chosen uniformly). Let $T_{N}^{\oned}$, $T_{N}^{\twod}$, and $T_{N}^{\threed}$ denote the first time one of $N$ such run and tumble processes escapes a ball of radius $L>0$ in 1d, 2d, and 3d. Defining the dimensionless tumbling rate $\rho:=\lambda L/v$, we find that %[[[check these]]]
\begin{align*}
\E[T_{N}^{\oned}]
%&=\frac{L}{v}\left[1+\frac{(1-e^{-{\rho} L/v})^{N}}{\alpha_{\oned} N}\right]+\textup{h.o.t.}\\
%&=\frac{L}{v}\left[1+2\frac{(1-e^{-{\rho} L/v})^{N+1}}{e^{-\frac{{\rho} L}{v}}\frac{{\rho} L}{v}  (\frac{{\rho} L}{v} +1) N}\right]+\textup{h.o.t.}\\
&=\frac{L}{v}\left[1+\Big(\frac{2(1-e^{-{\rho}})^{N+1}}{{\rho}({\rho} +1) e^{-{\rho}}}\Big)\frac{1}{N}\right]
+\textup{h.o.t.},\\
%+o\Big(\frac{L(1-e^{-{\rho} L/v})^{N}}{v\alpha_{\oned} N}\Big)
%,\qquad
%\alpha_{\oned}:=\frac{\frac{{\rho} L}{v}  (\frac{{\rho} L}{v} +1)}{2 (e^{\frac{{\rho} L}{v}}-1)},\\
\E[T_{N}^{\twod}]
%&=\frac{L}{v}\left[1+2\frac{(1-e^{-{\rho} L/v})^{N}}{(\alpha_{\twod} N)^{2}}\right]+\textup{h.o.t.}\\
&=\frac{L}{v}\left[1+\Big(\frac{(1-e^{-{\rho}})^{N+2}}{({\rho} e^{-{\rho}})^{2}}\Big)\frac{1}{N^{2}}\right]
+\textup{h.o.t.},\\
%+o\Big(\frac{L(1-e^{-{\rho} L/v})^{N}}{v(\alpha_{\twod} N)^{2}}\Big)
%,\qquad
%\alpha_{\twod}:=\frac{\frac{{\rho} L}{v}\sqrt{2}}{e^{\frac{{\rho} L}{v}}-1},\\
\E[T_{N}^{\threed}]
%&=\frac{L}{v}\left[1+\frac{(1-e^{-{\rho} L/v})^{N}}{\alpha_{\threed} N\ln N}\right]+\textup{h.o.t.}\\
&=\frac{L}{v}\left[1+\Big(\frac{(1-e^{-{\rho}})^{N+1}}{{\rho} e^{-{\rho}} }\Big)\frac{1}{N\ln N}\right]
+\textup{h.o.t.}
%+o\Big(\frac{L(1-e^{-{\rho} L/v})^{N}}{v\alpha_{\threed} N\ln N}\Big)
%,\qquad
%\alpha_{\threed}:=\frac{\frac{{\rho} L}{v}}{e^{\frac{{\rho} L}{v}}-1},
\end{align*}
%where h.o.t.\ refers to terms which are higher order for large $N$.
In addition to the means of $T_{N}^{\oned}$, $T_{N}^{\twod}$, and $T_{N}^{\threed}$, we also find their approximate full distributions for large $N$. We also apply our theorems to a PDMP whose velocity varies between switches (see section~\ref{section 4B}). The specific PDMP in section~\ref{section 4B} has been used to study both gene expression \cite{smiley2010} and storage systems \cite{boxma2005}.

%%%%%%%%%%%%%%%%%%%%%%%%%%%%%%%%%%%

%\newpage
\section{General theory}\label{math}

%%%%%%%%%%%%%%%%%%%%%%%%%%%%%%%%%%%
\subsection{Fastest FPT}

As in section~\ref{section main}, assume \eqref{tq} and \eqref{short0}-\eqref{shortlog0}. In light of \eqref{qimp}, in order to understand the distribution of $T_{N}$, it remains to understand the distribution of $T_{N}$ when $T_{N}>\ts$. Toward this end, let $\{\taut_{n}\}_{n\ge1}$ be an iid sequence of realizations of $\tau$ conditioned that $\tau>\ts$. That is,
\begin{align}\label{ccdf}
\P(\taut_{n}<t)
=\P(\tau<t|\tau>\ts)
=\frac{\P(\ts<\tau<t)}{\P(\tau>\ts)}
=\frac{\P(\ts<\tau<t)}{1-q}.
\end{align}
Further, define
\begin{align*}
\Tt_{N}
:=\min\{\taut_{1},\dots,\taut_{N}\}.
\end{align*}
Hence, we have
\begin{align}\label{tilde}
T_{N}
%&=_{\dist}(1-\xi_{N})\ts+\xi_{N}\Tt_{N}\\
&=_{\dist}\ts+\xi_{N}(\Tt_{N}-\ts),
\end{align}
where $\xi_{N}\in\{0,1\}$ is an independent Bernoulli random variable satisfying
\begin{align*}
\P(\xi_{N}=1)=(1-q)^{N}.
\end{align*}

Therefore, it remains to understand the distribution of $\Tt_{N}-\ts$. We prove below that a certain rescaling of $\Tt_{N}-\ts$ converges in distribution to a Weibull random variable. The definition and proposition below give some standard facts about the Weibull distribution.

\begin{definition*}
A random variable $X\ge0$ has a \emph{Weibull distribution} with scale parameter $t>0$ and shape parameter $p>0$ if
\begin{align}\label{xweibull}
\P(X>x)
=\exp(-(x/t)^{p}),\quad x\ge0.
\end{align}
If \eqref{xweibull} holds, then we write
\begin{align*}
X=_{\textup{d}}\textup{Weibull}(t,p).
\end{align*}
If \eqref{xweibull} holds with $p=1$, then $X\ge0$ has an \emph{exponential distribution} with mean $t>0$, and we write
\begin{align*}
X=_{\textup{d}}\textup{Exponential}(t).
\end{align*}
\end{definition*}

\begin{proposition}\label{basic}
If $X=_{\textup{d}}\textup{Weibull}(t,p)$, then its survival probability is in \eqref{xweibull} and its probability density function is
\begin{align*}
f_{X}(x)
=(p/t)(x/t)^{p-1}\exp(-(x/t)^{p}),\quad x>0,
\end{align*}
with $f_{X}(x)=0$ if $x\le0$. Its moments are
\begin{align*}
\E[X^{m}]
=t^{m}\Gamma(1+m/p),\quad m\ge0,
\end{align*}
where $\Gamma(\cdot)$ denotes the gamma function. Hence, its mean and variance are
\begin{align*}
\E[X]
=t\Gamma(1+1/p),
\quad
\textup{Variance}(X)
=t^{2}\Big[\Gamma(1+2/p)-\big(\Gamma(1+1/p)\big)^{2}\Big].
\end{align*}
%Its median and mode are
%\begin{align*}
%\textup{Median}(X)
%=t(\ln 2)^{1/p}
%\approx t(0.69)^{1/p},
%\quad
%\textup{Mode}(X)
%=\begin{cases}
%t\big(\frac{p-1}{p}\big)^{1/p} & \text{if }p>1,\\
%0 & \text{if }p\ge1.
%\end{cases}
%\end{align*}
\end{proposition}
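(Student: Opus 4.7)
The plan is to derive each claim in turn by direct calculus from the defining survival probability $\P(X>x)=\exp(-(x/t)^{p})$, $x\ge0$, given in \eqref{xweibull}.

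First, I would obtain the density. Since $\P(X>x)=\exp(-(x/t)^{p})$ for $x\ge0$ and $\P(X>x)=1$ for $x<0$, the cumulative distribution function $F_X(x)=1-\P(X>x)$ is absolutely continuous on $(0,\infty)$ with $F_X(x)=0$ for $x\le0$. Differentiating for $x>0$ gives
\begin{align*}
f_X(x)=\tfrac{\dd}{\dd x}\bigl[1-\exp(-(x/t)^{p})\bigr]=(p/t)(x/t)^{p-1}\exp(-(x/t)^{p}),
\end{align*}
which matches the claimed density, and $f_X(x)=0$ for $x\le0$ is immediate.

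Second, I would compute the moments $\E[X^{m}]=\int_{0}^{\infty}x^{m}f_X(x)\,\dd x$ via the substitution $u=(x/t)^{p}$, so that $x=t\,u^{1/p}$ and $\dd x=(t/p)\,u^{1/p-1}\,\dd u$. The factor $(p/t)(x/t)^{p-1}\,\dd x$ collapses to $e^{-u}\,\dd u$ after cancellation, leaving
\begin{align*}
\E[X^{m}]=t^{m}\int_{0}^{\infty}u^{m/p}e^{-u}\,\dd u=t^{m}\,\Gamma(1+m/p),
\end{align*}
where I use the standard definition $\Gamma(s)=\int_{0}^{\infty}u^{s-1}e^{-u}\,\dd u$ together with $\Gamma(s+1)=s\,\Gamma(s)$. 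Specializing to $m=1$ and $m=2$ and then applying the identity $\textup{Variance}(X)=\E[X^{2}]-(\E[X])^{2}$ yields the stated formulas for the mean and variance.

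None of these steps presents a real obstacle; the proposition is a routine collection of facts about the Weibull law, and the only mild bookkeeping is to verify that the exponents in the substitution for $\E[X^{m}]$ combine correctly so as to produce exactly $u^{m/p}e^{-u}\,\dd u$ under the integral.
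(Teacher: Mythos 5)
Your proposal is correct and is exactly the computation the paper has in mind: the paper's "proof" simply asserts that these are standard facts that follow from \eqref{xweibull}, and your differentiation of the survival function plus the substitution $u=(x/t)^{p}$ is the routine verification being alluded to. No gaps; the exponent bookkeeping in the moment integral checks out and yields $t^{m}\Gamma(1+m/p)$ as claimed.
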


The first theorem below proves that the following rescaling of $\Tt_{N}$ converges in distribution to a Weibull random variable with unit scale and shape $p$,
\begin{align*}
\Sigma_{N}
:=\frac{\Tt_{N}-t_{0}}{a_{N}t_{0}}
\to_{\dist}
\textup{Weibull}(1,p)\quad\text{as }N\to\infty,
\end{align*}
where $a_{N}$ and $p$ depend on the asymptotic distribution in \eqref{short0}-\eqref{shortlog0} of a single unconditioned FPT. In light of \eqref{tilde}, this gives the full distribution of $T_{N}$ for large $N$. The proofs of all the results of this section are collected in the Appendix.

\begin{theorem}\label{main}
Let $\{\tau_{n}\}_{n\ge1}$ be an iid sequence of random variables and assume
\begin{align*}%\label{q}
\P(\tau_{n}<\ts)=0,\quad \P(\tau_{n}=\ts)=q,\quad\text{for some $\ts>0$ and $q\in[0,1)$},
\end{align*}
and assume that for some $\alpha>0$ and $p>0$, either
\begin{align}
\P(\ts<\tau<\ts(1+\eps))
&=(1-q)\alpha \eps^{p}+o(\eps^{p}),\label{short}\\
\text{or}\quad\P(\ts<\tau<\ts(1+\eps))
&=(1-q)\alpha\ln(1/\eps)\eps^{p}+o(\eps^{p}\ln(1/\eps)),\label{shortlog}
\end{align}
as $\eps\to0+$, where $f(\eps)=o(g(\eps))$ means $\lim_{\eps\to0+}f/g=0$.

Then
\begin{align}\label{maineq}
T_{N}
%&=_{\dist}(1-\xi_{N})\ts+\xi_{N}\Tt_{N}\\
%&=_{\dist}\ts+\xi_{N}\Sigma_{N},
:=\min\{\tau_{1},\dots,\tau_{N}\}
&=_{\dist}\ts(1+ a_{N}\xi_{N}\Sigma_{N}),
\end{align}
where
\begin{align}\label{aN}
a_{N}
:=\begin{cases}
(\alpha N)^{-1/p} & \text{if \eqref{short} holds},\\
(\alpha N\ln(N)/p)^{-1/p} & \text{if \eqref{shortlog} holds},
\end{cases}
\end{align}
and $\xi_{N}\in\{0,1\}$ is a Bernoulli random variable satisfying
\begin{align*}
\P(\xi_{N}=1)=(1-q)^{N},
\end{align*}
and $\Sigma_{N}>0$ is a random variable that converges in distribution to a Weibull random variable with unit scale and shape $p$,
\begin{align}\label{cw}
\Sigma_{N}
\to_{\dist}
%X=_{\dist}
\textup{Weibull}(1,p)\quad\text{as }N\to\infty.
\end{align}
The random variables $\xi_{N}$ and $\Sigma_{N}$ are independent.
\end{theorem}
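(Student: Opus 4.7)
The plan is to reduce everything to the convergence in distribution of the single random variable $\Sigma_{N} := (\Tt_{N} - \ts)/(a_{N}\ts)$ to a $\textup{Weibull}(1,p)$ random variable, where as in \eqref{tilde} $\Tt_{N} = \min\{\taut_{1},\dots,\taut_{N}\}$ and the $\taut_{n}$ are iid copies of $\tau$ conditioned on $\tau>\ts$.

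First I would justify the decomposition \eqref{tilde} rigorously. Because the $\tau_{n}$ are iid, the event $\{T_{N}=\ts\}$ occurs iff at least one of the $\tau_{n}$ equals $\ts$, which has probability $1-(1-q)^{N}$; conversely, conditional on $\{T_{N}>\ts\}$, each $\tau_{n}$ satisfies $\tau_{n}>\ts$, so the conditional law of $T_{N}$ equals that of $\Tt_{N}$. Constructing $\xi_{N}\in\{0,1\}$ (Bernoulli with success probability $(1-q)^{N}$) and $\Tt_{N}$ on a product space makes them independent and gives $T_{N} =_{\dist} \ts + \xi_{N}(\Tt_{N}-\ts) = \ts(1 + a_{N}\xi_{N}\Sigma_{N})$, which is precisely \eqref{maineq}. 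So the bulk of the proof is verifying \eqref{cw}.

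Second, to show $\Sigma_{N}\to_{\dist}\textup{Weibull}(1,p)$, I would compute the survival function directly. By iid-ness and \eqref{ccdf},
\begin{align*}
\P(\Sigma_{N} > x)
= \P\big(\Tt_{N} > \ts(1+a_{N}x)\big)
= \Big(1 - \tfrac{\P(\ts < \tau < \ts(1+a_{N}x))}{1-q}\Big)^{N}.
\end{align*}
Under hypothesis \eqref{short}, $\P(\ts<\tau<\ts(1+a_{N}x))/(1-q) = \alpha(a_{N}x)^{p} + o(a_{N}^{p})$, and the choice $a_{N}^{p} = 1/(\alpha N)$ makes the leading term equal $x^{p}/N$. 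Under \eqref{shortlog}, the leading term is $\alpha\ln(1/(a_{N}x))(a_{N}x)^{p}$; since $a_{N}^{p} = p/(\alpha N\ln N)$ and $\ln(1/a_{N}) = \tfrac{1}{p}\ln(\alpha N\ln N/p) \sim \tfrac{1}{p}\ln N$, this again reduces to $x^{p}/N$ plus a lower-order term. In either case,
\begin{align*}
\P(\Sigma_{N} > x) = \Big(1 - \tfrac{x^{p}}{N} + o(\tfrac{1}{N})\Big)^{N} \;\longrightarrow\; e^{-x^{p}},
\end{align*}
for each fixed $x>0$, which is exactly the $\textup{Weibull}(1,p)$ survival function, and convergence of the survival function at every continuity point gives \eqref{cw}.

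The main obstacle is the bookkeeping for the logarithmic regime \eqref{shortlog}: one must verify that the remainder $o(\eps^{p}\ln(1/\eps))$ evaluated at $\eps = a_{N}x$ is truly $o(1/N)$, and that the subleading piece $-(\ln x)\,\alpha(a_{N}x)^{p}$ is $o(1/N)$ too. Both follow from $a_{N}^{p}\ln(1/a_{N}) \sim 1/(\alpha N)$ and $a_{N}^{p} = o(a_{N}^{p}\ln N)$, but these estimates need to be written out carefully. Once that asymptotic identity is in hand, everything else is the standard $(1-c/N+o(1/N))^{N}\to e^{-c}$ limit, so no further novelty is required.
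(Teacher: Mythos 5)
Your proposal is correct, and the reduction to the conditioned minimum $\Tt_{N}$ via the independent Bernoulli variable $\xi_{N}$ is exactly the decomposition \eqref{tilde} that the paper sets up before the theorem. Where you diverge is in the key step \eqref{cw}: the paper defines $X_{n}=(\ts-\taut_{n})/\ts$, verifies the regular-variation condition $\lim_{t\to0+}(1-F(-ty))/(1-F(-t))=y^{p}$ for the distribution function $F$ of $X_{n}$, and then invokes the classical domain-of-attraction results for the (reverse) Weibull law (Theorem 1.2.1 and Corollary 1.2.4 of de Haan and Ferreira), with the normalization $a_{N}$ obtained by solving $\alpha(\chi+(1-\chi)\ln(1/a_{N}))a_{N}^{p}=1/N$ — which in the logarithmic case produces the LambertW expression \eqref{lambertw} before being simplified asymptotically. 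You instead compute the survival function directly, $\P(\Sigma_{N}>x)=\bigl(1-\P(\ts<\tau<\ts(1+a_{N}x))/(1-q)\bigr)^{N}\to e^{-x^{p}}$, which amounts to reproving the relevant piece of extreme value theory in this special case. Your route is more elementary and self-contained, and it makes the origin of the two choices of $a_{N}$ in \eqref{aN} completely transparent (including why the $-\alpha(\ln x)(a_{N}x)^{p}$ term and the $o(\eps^{p}\ln(1/\eps))$ remainder are both $o(1/N)$); the paper's route is shorter on the page and automatically inherits the standard EVT fact that any asymptotically equivalent sequence $a_{N}'$ works, which it records in the remark following the theorem. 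The estimates you flag as needing care in the logarithmic regime are exactly the right ones, and they go through as you indicate, so I see no gap.
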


\begin{remark}
If \eqref{maineq} and \eqref{cw} hold for some sequence of scalings $\{a_{N}\}$, then it is straightforward to check that \eqref{maineq} and \eqref{cw} hold for any sequence $\{a_{N}'\}$ satisfying
\begin{align*}
\lim_{N\to\infty}a_{N}/a_{N}'=1.
\end{align*}
Therefore, there are infinitely many valid choices of the scalings $\{a_{N}\}$ in Theorem~\ref{main}. The choices in \eqref{aN} are merely the simplest.

We further note that the values of $\{a_{N}\}$ are determined by inverting the function $S(\eps):=\P(\ts<\tau<\ts(1+\eps))$ for $\eps\ll1$ (see the proof of Theorem~\ref{main} in the Appendix for details). In the case that \eqref{shortlog} holds, the value of $a_{N}$ that results from this inversion is
\begin{align}\label{lambertw}
a_{N}
=
\Big(-\alpha N W_{-1}\big(-p/(N\alpha)\big)/p\Big)^{-1/p},
\end{align}
where $W_{-1}(z)$ denotes the lower branch of the LambertW function \cite{corless1996}. The value $a_{N}=(\alpha N\ln(N)/p)^{-1/p}$ in \eqref{aN} results from finding the asymptotic behavior of \eqref{lambertw} for large $N$.
\end{remark}

While Theorem~\ref{main} regards convergence in distribution, it is known that convergence in distribution does not necessarily imply moment convergence \cite{billingsley2013}. That is, for a sequence of random variables $\{X_{N}\}_{N\ge1}$, we cannot in general conclude that $\E[(X_{N})^{m}]\to\E[X^{m}]$ as $N\to\infty$ merely because $X_{N}\tod X$ as $N\to\infty$. However, in the case of extreme values, convergence in distribution does imply moment convergence \cite{pickands1968}.

\begin{theorem}\label{moments}
Under the assumptions of Theorem~\ref{main}, assume further that
\begin{align*}
\E[T_{N}]<\infty\quad\text{for some }N\ge1.
\end{align*}
Then for each moment $m\ge0$, we have that
\begin{align*}
\E[(\Sigma_{N})^{m}]
\to\Gamma(1+m/p)\quad\text{as }N\to\infty.
\end{align*}
Therefore, if $m\ge0$, then
\begin{align*}
\E[(T_{N}-\ts)^{m}]
&= (1-q)^{N}({\ts a_{N}})^{m}\Gamma(1+m/p)\\
&\quad+o\Big((1-q)^{N}({\ts a_{N}})^{m}\Big)\quad\text{as }N\to\infty,
\end{align*}
where $f(N)=o(g(N))$ means $\lim_{N\to\infty}f/g=0$. Hence,
\begin{align*}
\E[T_{N}]
&= \ts+(1-q)^{N}\ts a_{N}\Gamma(1+1/p)\\
&\quad+o\big((1-q)^{N}\ts a_{N}\big)\quad\text{as }N\to\infty,\\
\textup{Variance}(T_{N})
&=({\ts a_{N}})^{2}(1-q)^{N}\Big[\Gamma(1+2/p)-(1-q)^{N}\big(\Gamma(1+1/p)\big)^{2}\Big]\\
&\quad+o\big((1-q)^{N}({\ts a_{N}})^{2}\big)\quad\text{as }N\to\infty.
\end{align*}
\end{theorem}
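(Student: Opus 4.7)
The plan is to reduce the moment expansions for $T_N$ to the single statement $\E[(\Sigma_N)^m]\to\Gamma(1+m/p)$ as $N\to\infty$, and then to promote the distributional convergence of $\Sigma_N$ supplied by Theorem~\ref{main} to convergence of moments by a uniform integrability argument powered by the hypothesis that $\E[T_N]<\infty$ for some $N$.

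Granted the moment convergence of $\Sigma_N$, the remaining steps are essentially algebraic. From the distributional identity \eqref{maineq} and the independence of $\xi_N$ and $\Sigma_N$, for every integer $m\ge1$ I would use $\xi_N^m=\xi_N$ to write
\begin{align*}
\E[(T_N-\ts)^m]
=(a_N\ts)^m\,\E[\xi_N]\,\E[(\Sigma_N)^m]
=(1-q)^N(a_N\ts)^m\,\E[(\Sigma_N)^m],
\end{align*}
and then substitute $\Gamma(1+m/p)$ for $\E[(\Sigma_N)^m]$, absorbing the error $(1-q)^N(a_N\ts)^m\bigl(\E[(\Sigma_N)^m]-\Gamma(1+m/p)\bigr)$ into the stated $o$ term. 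The formulas for $\E[T_N]$ and $\textup{Variance}(T_N)$ then follow immediately from $\E[T_N]=\ts+\E[T_N-\ts]$ and $\textup{Variance}(T_N)=\E[(T_N-\ts)^2]-(\E[T_N-\ts])^2$.

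The core step is therefore to prove $\E[(\Sigma_N)^m]\to\Gamma(1+m/p)$. Since Theorem~\ref{main} already gives $\Sigma_N\tod\textup{Weibull}(1,p)$, this reduces to uniform integrability of $\{(\Sigma_N)^m\}_{N\ge1}$. I would obtain this by invoking the classical extreme-value moment-convergence theorem of Pickands~\cite{pickands1968}: whenever the linearly rescaled extreme of an iid sequence converges in distribution to a nondegenerate limit $Y$, the $m$-th moments converge to $\E[|Y|^m]$ as soon as there exists a single index $N_0$ for which the corresponding rescaled extreme has finite $m$-th moment. The minimum case required here reduces to Pickands' maximum statement through a monotone transformation of the $\tau_n$. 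The hypothesis $\E[T_{N_0}]<\infty$, combined with $\P(T_N>t)=\P(\tau_1>t)^N$, supplies the needed input: for any fixed $m$, taking $N$ a sufficiently large multiple of $N_0$ makes $\int_0^\infty t^{m-1}\P(\tau_1>t)^N\,dt<\infty$, and this transfers to finiteness of $\E[(\Sigma_N)^m]$ via $\Sigma_N=(\Tt_N-\ts)/(a_N\ts)$ and \eqref{ccdf}.

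The main obstacle is this uniform integrability step, which amounts to controlling
\begin{align*}
\P(\Sigma_N>s)=(1-q)^{-N}\bigl[\P(\tau>\ts(1+a_N s))\bigr]^N
\end{align*}
uniformly in $N$ for large $s$. The short-time hypotheses \eqref{short}-\eqref{shortlog} describe $\P(\tau>\cdot)$ only in a neighborhood of $\ts$, whereas large $s$ probes the far tail, so a direct $\eps$-style estimate would require a separate bridge between the near-$\ts$ regime and infinity. Pickands' theorem bypasses this bridge by exploiting the general principle that for extremes, finiteness of one moment at a single scale automatically upgrades to uniform moment control at every later scale; the remaining bookkeeping needed to translate the conclusion from $T_N$ back to $\Sigma_N$ is routine.
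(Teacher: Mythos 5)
Your proposal is correct and follows essentially the same route as the paper: the paper likewise bootstraps finiteness of higher moments from $\E[T_{N}]<\infty$ at a single $N$ (via $\E[(T_{2^{m-1}N})^{m}]<\infty$, matching your ``sufficiently large multiple of $N_0$'' step) and then invokes Theorem 2.1 of Pickands to upgrade the distributional convergence of $\Sigma_{N}$ to moment convergence, with the remaining formulas following from the algebra with $\xi_{N}$ exactly as you describe.
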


%%%%%%%%%%%%%%%%%%%%%%%%%%%%%%%%%%%
\subsection{$k$th fastest FPT}

In this subsection, we generalize Theorems~\ref{main} and \ref{moments} on the fastest FPT to the $k$th fastest FPT,
\begin{align*}
T_{k,N}
:=\min\big\{\{\tau_{1},\dots,\tau_{N}\}\backslash\cup_{j=1}^{k-1}\{T_{j,N}\}\big\},\quad k\in\{1,\dots,N\},
\end{align*}
where $T_{1,N}:=T_{N}$. The distribution of $T_{k,N}$ can be described in terms of a generalized Gamma distribution.

\begin{definition*}
A random variable $X\ge0$ has a \emph{generalized Gamma distribution} with parameters $t>0$, $p>0$, $k>0$ if
\begin{align}\label{gGamma}
\P(X>x)
=\frac{\Gamma(k,(x/t)^{p})}{\Gamma(k)},\quad x\ge0,
\end{align}
where $\Gamma(a,z):=\int_{z}^{\infty}u^{a-1}e^{-u}\,\dd u$ denotes the upper incomplete gamma function. If \eqref{gGamma} holds, then we write
\begin{align*}
X=_{\textup{d}}\textup{gen}\Gamma(t,p,k).
\end{align*}
If \eqref{gGamma} holds with $p=1$ and $k\in\{1,2,3,\dots\}$, then $X\ge0$ has an \emph{Erlang distribution} with scale $t>0$ and shape $k$, and we write
\begin{align*}
X=_{\textup{d}}\textup{Erlang}(t,k).
\end{align*}
\end{definition*}

\begin{proposition}\label{basicg}
If $X=_{\textup{d}}\textup{gen}\Gamma(t,p,k)$, then its survival probability is in \eqref{gGamma} and its probability density function is
\begin{align*}
f_{X}(x)
=\frac{p(x/t)^{kp}\exp(-(x/t)^{p})}{x\Gamma(k)},\quad x>0,
\end{align*}
with $f_{X}(x)=0$ if $x\le0$. Its moments are
\begin{align*}
\E[X^{m}]
=t^{m}\frac{\Gamma(k+m/p)}{\Gamma(k)},\quad m\ge0.
\end{align*}
Hence, its mean and variance are
\begin{align*}
\E[X]
=t\frac{\Gamma(k+1/p)}{\Gamma(k)},
\quad
\textup{Variance}(X)
=t^{2}\left[\frac{\Gamma(k+2/p)}{\Gamma(k)}-\Big(\frac{\Gamma(k+1/p)}{\Gamma(k)}\Big)^{2}\right].
\end{align*}
%Its mode is
%\begin{align*}
%\textup{Mode}(X)
%=\begin{cases}
%t\big(\frac{kp-1}{p}\big)^{1/p} & \text{if }kp>1,\\
%0 & \text{if }p\ge1.
%\end{cases}
%\end{align*}
Further, if $X=_{\textup{d}}\textup{Erlang}(t,k)$, then $X$ is equal in distribution to a sum of $k$ iid exponential random variables,
\begin{align*}
X
=_{\textup{d}}\sum_{j=1}^{k}X_{j},\quad\text{where }X_{j}=_{\dist}\textup{Exponential}(t).
\end{align*}
\end{proposition}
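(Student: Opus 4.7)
The proposition is a compendium of standard distributional facts, so the plan is simply to verify each item by direct computation rather than to invent any new technique. I would organize the proof into three short steps.

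First I would derive the density from the survival function. Writing $G(x) := \P(X>x) = \Gamma(k,(x/t)^p)/\Gamma(k)$ for $x\ge 0$, I differentiate: by the fundamental theorem of calculus applied to the definition $\Gamma(a,z) = \int_z^\infty u^{a-1}e^{-u}\,\dd u$, we have $\tfrac{\dd}{\dd z}\Gamma(a,z) = -z^{a-1}e^{-z}$. Setting $z = (x/t)^p$ and using $\tfrac{\dd z}{\dd x} = p(x/t)^p/x$, the chain rule gives $f_X(x) = -G'(x) = p(x/t)^{kp}\exp(-(x/t)^p)/(x\Gamma(k))$, which is the claimed density.

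Next I would compute the $m$-th moment by the substitution $u = (x/t)^p$ in $\E[X^m] = \int_0^\infty x^m f_X(x)\,\dd x$. Under this substitution $x = tu^{1/p}$ and the integrand reduces, after a routine cancellation, to $t^m u^{k+m/p-1}e^{-u}/\Gamma(k)\,\dd u$. Recognizing $\int_0^\infty u^{k+m/p-1}e^{-u}\,\dd u = \Gamma(k+m/p)$ yields the moment formula $\E[X^m] = t^m \Gamma(k+m/p)/\Gamma(k)$. The mean and variance formulas then follow by plugging in $m=1$ and $m=2$ and using $\textup{Variance}(X) = \E[X^2] - (\E[X])^2$.

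Finally, for the Erlang representation as a sum of $k$ iid exponentials, I set $p=1$ and $\Gamma(k) = (k-1)!$ so the density above simplifies to $f_X(x) = x^{k-1}e^{-x/t}/(t^k(k-1)!)$. I would then argue by induction on $k$: the case $k=1$ is trivial, and the inductive step is a one-line convolution $(f_{k-1} * f_1)(x) = \int_0^x \tfrac{y^{k-2}e^{-y/t}}{t^{k-1}(k-2)!}\cdot \tfrac{e^{-(x-y)/t}}{t}\,\dd y = \tfrac{e^{-x/t}}{t^k(k-1)!}\int_0^x (k-1) y^{k-2}\,\dd y = f_k(x)$. Equivalently, one may simply compare Laplace transforms: $\E[e^{-sX}] = (1+st)^{-k}$ both for $X \sim \textup{Erlang}(t,k)$ (from the density computation) and for a sum of $k$ iid $\textup{Exponential}(t)$ variables, since their individual transforms $(1+st)^{-1}$ multiply.

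There is no real obstacle here; the only small care needed is to match the parameterization (scale $t$, not rate $1/t$) consistently across the substitution in step two and the convolution in step three, and to verify that the density expression indeed integrates to one, which is automatic from $\E[X^0] = \Gamma(k)/\Gamma(k) = 1$ in the moment formula.
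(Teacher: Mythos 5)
Your proof is correct and matches the paper's intent: the paper's ``proof'' simply declares these facts straightforward to verify from the survival function \eqref{gGamma}, and your differentiation of the incomplete gamma function, the substitution $u=(x/t)^{p}$ for the moments, and the convolution (or Laplace transform) argument for the Erlang case are precisely the routine verifications the paper has in mind. No gaps.
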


The following theorem gives the distribution of $T_{k,N}$ for large $N$.

\begin{theorem}\label{kth}
Under the assumptions of Theorem~\ref{main}, we have that
\begin{align}\label{tknsum}
T_{k,N}
=_{\dist}\ts\Big(1+{a_{N}}\sum_{j=0}^{k-1}\xi_{j,N}\Sigma_{k-j,N}\Big),
\end{align}
where the random variables $\{\xi_{j,N}\}_{j=0}^{N}$ satisfy $\xi_{j,N}\in\{0,1\}$, $\sum_{j=0}^{N}\xi_{j,N}=1$, and 
\begin{align*}
\P(\xi_{j,N}=1)
={N\choose j}q^{j}(1-q)^{N-j},
\end{align*}
and $\{\Sigma_{k-j,N}\}_{j=0}^{k-1}$ are nonnegative random variables that converge in distribution to generalized Gamma random variables,
\begin{align}\label{cd1}
\Sigma_{k-j,N}
\to_{\dist}
%X=_{\dist}
\textup{gen}\Gamma(1,p,k-j)\quad\text{as }N\to\infty.
\end{align}
The random variables $\{\xi_{j,N}\}_{j=0}^{N}$ are independent of $\{\Sigma_{k-j,N}\}_{j=0}^{k-1}$.
\end{theorem}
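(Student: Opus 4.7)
My plan is to reduce Theorem~\ref{kth} to an order-statistic calculation of the same flavor as Theorem~\ref{main} by decomposing the sample $\{\tau_1,\ldots,\tau_N\}$ according to the number of entries equal to $\ts$. Let $M_N := \#\{n \leq N : \tau_n = \ts\}$; by the iid assumption and \eqref{tq}, $M_N$ is $\textup{Binomial}(N,q)$, so the indicators $\xi_{j,N} := 1_{\{M_N = j\}}$ have the stated marginals and satisfy $\sum_{j=0}^{N} \xi_{j,N} = 1$. On $\{M_N = j\}$, the $k$th order statistic $T_{k,N}$ equals $\ts$ whenever $j \geq k$, and equals the $(k-j)$th smallest of the $N-j$ samples strictly exceeding $\ts$ whenever $j < k$. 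Conditional on which indices supply the ``$\ts$-atoms,'' the remaining $N-j$ random variables are iid from the conditional law $\taut$ in \eqref{ccdf}, and this conditional law does not depend on which indices were chosen. Writing $\Tt_{k-j,N-j}$ for their $(k-j)$th order statistic and setting
\begin{align*}
\Sigma_{k-j,N} := \frac{\Tt_{k-j,N-j} - \ts}{\ts\, a_N},
\end{align*}
I obtain the representation \eqref{tknsum}, together with the asserted independence of $\{\xi_{j,N}\}$ and $\{\Sigma_{k-j,N}\}$.

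It remains to verify the distributional convergence \eqref{cd1}. For fixed $\sigma > 0$ and $j \in \{0,\ldots,k-1\}$, I would write the survival probability as a binomial tail,
\begin{align*}
\P(\Sigma_{k-j,N} > \sigma)
= \sum_{i=0}^{k-j-1} \binom{N-j}{i} F_{\taut}\big(\ts(1+a_N\sigma)\big)^{i} \big(1 - F_{\taut}(\ts(1+a_N\sigma))\big)^{N-j-i},
\end{align*}
where $F_{\taut}(t) := \P(\ts < \tau < t)/(1-q)$. The key step is showing that $(N-j)\,F_{\taut}(\ts(1+a_N\sigma)) \to \sigma^p$ as $N \to \infty$; this follows from \eqref{short} or \eqref{shortlog} combined with the definition of $a_N$ in \eqref{aN}, using also that $a_{N-j}/a_N \to 1$ for each fixed $j$. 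A standard Poisson-limit argument then yields
\begin{align*}
\P(\Sigma_{k-j,N} > \sigma) \to e^{-\sigma^p} \sum_{i=0}^{k-j-1} \frac{\sigma^{pi}}{i!} = \frac{\Gamma(k-j,\sigma^p)}{\Gamma(k-j)},
\end{align*}
which is exactly the survival function of $\textup{gen}\Gamma(1,p,k-j)$ at $\sigma$.

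The main technical obstacle is verifying the asymptotic $(N-j)\,F_{\taut}(\ts(1+a_N\sigma)) \to \sigma^p$ in the logarithmic regime \eqref{shortlog}, since there $a_N$ is not the exact inverse of $\eps \mapsto \eps^p\ln(1/\eps)$ but only its leading-order form (the exact inverse involves a LambertW function, cf.\ the remark following Theorem~\ref{main}). However, the $k=1$ instance of this estimate is already handled in the proof of Theorem~\ref{main}, and the extension to general $k$ reduces to the same estimate for the single product $(N-j)\,F_{\taut}(\cdot)$ followed by term-by-term convergence of a finite binomial sum; the passage from $N$ to $N-j$ in the scaling is absorbed into the fact that $a_{N-j}/a_N \to 1$ for fixed $j < k$.
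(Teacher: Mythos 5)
Your proposal is correct and follows essentially the same route as the paper: the same decomposition of the sample by the number of FPTs equal to $\ts$ (your $\xi_{j,N}=1_{\{M_N=j\}}$ is exactly the paper's indicator that $T_{j,N}=\ts$ and $T_{j+1,N}>\ts$), the same identification of $\Sigma_{k-j,N}$ as a rescaled $(k-j)$th order statistic of the conditioned sample, and the same observation that passing from $N-j$ to $N$ conditioned samples is immaterial in the limit. The only difference is that you verify the convergence \eqref{cd1} by an explicit binomial-tail/Poisson-limit computation, whereas the paper cites the standard result on limiting distributions of $k$th order statistics (Theorem 3.5 of \cite{colesbook}) together with Theorem~\ref{main}; your key estimate $(N-j)F_{\taut}(\ts(1+a_N\sigma))\to\sigma^p$ is exactly what that citation encapsulates.
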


The random variables $\{\xi_{j,N}\}_{j=0}^{k-1}$ in Theorem~\ref{kth} serve to indicate how many of the $k$ fastest FPTs, $T_{1,N},\dots,T_{k,N}$, are equal to $\ts$. In particular, the event $\xi_{j,N}=1$ (and thus $\xi_{i,N}=0$ for $i\neq j$) corresponds to the event that exactly $j\in\{0,\dots,k-1\}$ of the $k$ fastest FPTs are equal to $\ts$,
\begin{align*}
T_{1,N}=T_{2,N}=\dots=T_{j,N}=\ts,\quad T_{j+1,N}>\ts.
\end{align*}
Hence, if $\xi_{j,N}=1$, then \eqref{tknsum} becomes
\begin{align*}
T_{k,N}
=_{\dist}\ts(1+a_{N} \Sigma_{k-j,N})
\end{align*}
and $\Sigma_{k-j,N}$ is a rescaling of $\Tt_{k-j,N-j}$
\begin{align*}
\Sigma_{k-j,N}
=\frac{\Tt_{k-j,N-j}-\ts}{a_{N}\ts},
\end{align*}
where we define $\Tt_{k,N}$ as the $k$th fastest FPT out of the $N$ conditioned FPTs,
\begin{align*}
\Tt_{k,N}
:=\min\big\{\{\taut_{1},\dots,\taut_{N}\}\backslash\cup_{j=1}^{k-1}\{\Tt_{j,N}\}\big\},\quad k\in\{1,\dots,N\},
\end{align*}
where $\Tt_{1,N}:=\Tt_{N}$. Since Theorem~\ref{kth} concerns the limit $N\to\infty$, replacing $\Tt_{k-j,N-j}$ by $\Tt_{k-j,N}$ is immaterial. Also, if $\xi_{j,N}=1$ for $j\ge k$, then \eqref{tknsum} merely reduces to
\begin{align*}
T_{k,N}=\ts,
\end{align*}
since this corresponds to the event that at least $k$ many FPTs were equal to $\ts$.

The following theorem ensures the convergence of the moments of the $k$th fastest FPT.

\begin{theorem}\label{kth moment}
Under the assumptions of Theorem~\ref{moments}, we have that for each moment $m\ge0$,
\begin{align*}
\E[(\Sigma_{k-j,N})^{m}]
\to\frac{\Gamma(k-j+m/p)}{\Gamma(k-j)}\quad\text{as }N\to\infty.
\end{align*}
Therefore, if $m\ge0$ and $q\in(0,1)$, then
\begin{align*}
\E[(T_{k,N}-\ts)^{m}]
&= ({\ts a_{N}})^{m}\sum_{j=0}^{k-1}{N\choose j}q^{j}(1-q)^{N-j}\frac{\Gamma(k-j+m/p)}{\Gamma(k-j)}\\
&\quad+o\Big(({\ts a_{N}})^{m}\frac{(Nq)^{k-1}(1-q)^{N-k+1}}{(k-1)!}\Big)\quad\text{as }N\to\infty.
\end{align*}
Similarly, if $m\ge0$ and $q=0$, then
\begin{align*}
\E[(T_{k,N}-\ts)^{m}]
&= ({\ts a_{N}})^{m}\frac{\Gamma(k+m/p)}{\Gamma(k)}+o\big(({\ts a_{N}})^{m}\big)\quad\text{as }N\to\infty.
\end{align*}
\end{theorem}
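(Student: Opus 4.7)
The plan is to follow the template of Theorem~\ref{moments}: first upgrade the distributional convergence $\Sigma_{k-j,N}\to_{\dist}\textup{gen}\Gamma(1,p,k-j)$ supplied by Theorem~\ref{kth} to convergence of each moment, and then assemble the final formula using the mixture representation \eqref{tknsum} and the independence of the $\xi_{j,N}$'s and $\Sigma_{k-j,N}$'s.

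For the moment upgrade, the main tool is uniform integrability, which I would reduce to the already-handled $k=1$ case of Theorem~\ref{moments} via an order-statistic domination. Let $r:=k-j$ and partition the $N-j$ conditioned samples $\taut_{1},\dots,\taut_{N-j}$ into $r$ groups of size $M:=\lfloor(N-j)/r\rfloor$; letting $\Tt^{(i)}_{1,M}$ denote the minimum over the $i$th group, the $r$ group minima are $r$ distinct samples all bounded above by $\max_{i}\Tt^{(i)}_{1,M}$, so
\begin{align*}
\Tt_{r,N-j}-\ts \;\leq\; \max_{1\le i\le r}\bigl(\Tt^{(i)}_{1,M}-\ts\bigr).
\end{align*}
Raising to the power $m+1$, using $\max^{m+1}\le\sum$, and taking expectations gives $\E[(\Tt_{r,N-j}-\ts)^{m+1}]\le r\,\E[(\Tt_{1,M}-\ts)^{m+1}]$. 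Applying Theorem~\ref{moments} to $\{\taut_{n}\}$, which by \eqref{ccdf} satisfies the short-time hypotheses of Theorem~\ref{main} with $q$ replaced by $0$, and noting that $a_{M}/a_{N}\to r^{1/p}$ in both cases of \eqref{aN}, yields $\sup_{N}\E[\Sigma_{k-j,N}^{m+1}]<\infty$. This gives uniform integrability of $\Sigma_{k-j,N}^{m}$, so the distributional convergence from Theorem~\ref{kth} promotes to $\E[\Sigma_{k-j,N}^{m}]\to\Gamma(k-j+m/p)/\Gamma(k-j)$. The hypothesis $\E[T_{N}]<\infty$ transfers to $\E[\Tt_{N}]<\infty$ via the identity $\E[T_{N}]=\ts(1-(1-q)^{N})+(1-q)^{N}\E[\Tt_{N}]$, so Theorem~\ref{moments} applies to the conditioned sequence.

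With moment convergence in hand, I would compute $\E[(T_{k,N}-\ts)^{m}]$ from \eqref{tknsum}. Because the $\xi_{j,N}$ are indicators of mutually exclusive events, pairwise products vanish, so
\begin{align*}
\Bigl(\sum_{j=0}^{k-1}\xi_{j,N}\Sigma_{k-j,N}\Bigr)^{m} = \sum_{j=0}^{k-1}\xi_{j,N}\Sigma_{k-j,N}^{m},
\end{align*}
and the independence of the $\xi$'s and $\Sigma$'s gives
\begin{align*}
\E[(T_{k,N}-\ts)^{m}] = (\ts a_{N})^{m}\sum_{j=0}^{k-1}\binom{N}{j}q^{j}(1-q)^{N-j}\,\E[\Sigma_{k-j,N}^{m}].
\end{align*}
Inserting the moment limit yields the stated formulas: for $q\in(0,1)$ the ratio of successive binomial terms, $(N-j)q/((j+1)(1-q))$, diverges with $N$, so the sum is dominated by its $j=k-1$ term, of asymptotic size $(Nq)^{k-1}(1-q)^{N-k+1}/(k-1)!$, which controls the $o(\cdot)$ error; for $q=0$ only the $j=0$ term survives and the formula collapses to the claimed single term. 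The main obstacle I anticipate is the uniform-integrability step, specifically verifying $a_{M}/a_{N}\to r^{1/p}$ in the logarithmic regime \eqref{shortlog}, where the ratio is $\bigl(r\ln N/\ln(N/r)\bigr)^{1/p}$ rather than the immediate $r^{1/p}$ of the power-law case; everything else is bookkeeping once the moment convergence is secured.
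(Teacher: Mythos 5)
Your proposal is correct, and it shares the paper's overall skeleton (use Theorem~\ref{kth} for convergence in distribution, upgrade to moment convergence via uniform integrability, then assemble the formula from the mixture representation \eqref{tknsum} using mutual exclusivity of the $\xi_{j,N}$'s and their independence from the $\Sigma_{k-j,N}$'s). Where you genuinely diverge is in how the uniform moment bound is obtained. The paper writes the survival function of the $k$th conditioned order statistic explicitly, $\P\big(\tfrac{\Tt_{k,N}-\ts}{\ts}>x\big)=\sum_{j=0}^{k-1}{N\choose j}(1-S(x))^{j}(S(x))^{N-j}$, integrates via the layer-cake formula, and invokes ``standard integral estimates'' based on \eqref{short}--\eqref{shortlog}; you instead dominate the $k$th order statistic of $N-j$ samples by the maximum of $r=k-j$ group minima over blocks of size $M=\lfloor(N-j)/r\rfloor$, which reduces everything to the already-proved $k=1$ moment asymptotics of Theorem~\ref{moments} applied to the conditioned sequence (legitimate, since by \eqref{ccdf} that sequence satisfies the hypotheses with $q=0$ and the same $\alpha,p$, and finiteness of $\E[\Tt_{N}]$ follows from \eqref{tilde}). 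Your route buys two things: it avoids redoing tail-integral estimates by recycling the $k=1$ result, and by bounding the $(m+1)$st moment it cleanly handles uniform integrability of $\Sigma_{k-j,N}^{m}$ for every $m$, whereas the paper's stated bound \eqref{suffc} controls only the second moment (sufficient as written only for $m<2$, though the same estimate plainly extends). The only points worth making explicit in your write-up are the monotonicity step (discarding the leftover $N-j-rM$ samples can only increase the $r$th order statistic, so the domination by the subsample is in the right direction) and the check that $a_{M}/a_{N}$ stays bounded in the logarithmic case of \eqref{aN}, which you already resolve correctly since $\ln(N/r)/\ln N\to1$. You also spell out the final bookkeeping — identifying the $j=k-1$ term as dominant when $q\in(0,1)$ and the collapse to $j=0$ when $q=0$ — which the paper leaves implicit.
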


%%%%%%%%%%%%%%%%%%%%%%%%%%%%%%%%%%%
\section{One-dimensional run and tumble}\label{section 1d}

In the following four sections, we apply the general theory of section~\ref{math} to extreme FPTs of some specific PDMPs. In order to apply the theory to each example, we merely need to show that the distribution of a single FPT satisfies either \eqref{short} or \eqref{shortlog} for some $\ts>0$, $q\in[0,1)$, $\alpha>0$, and $p>0$.

Consider a 1d run and tumble process (commonly called a velocity jump process) that moves with a constant velocity to the left or a constant velocity to the right and switches direction at a constant Poissonian rate. Specifically, consider the PDMP $(X(t),J(t))\in\R\times\{0,1\}$ satisfying
\begin{align}\label{1dvjp}
\begin{split}
\frac{\dd}{\dd t}X(t)
&=\begin{cases}
-v_{0}<0 & \text{if }J(t)=0,\\
v_{1}>0 & \text{if }J(t)=1,
\end{cases}\\
X(0)
&=0,
\end{split}
\end{align}
where $J(t)\in\{0,1\}$ is a two-state continuous-time Markov jump process with jump rates
\begin{align*}
0\Markov{\lambda_{1}}{\lambda_{0}}1.
\end{align*}
Let $p_{0}$ and $p_{1}$ give the initial distribution of $J$,
\begin{align*}
\P(J(0)=0)=p_{0}=1-p_{1}\in[0,1].
\end{align*}

%%%%%%%%%%%%%%%%%%%%%%%%%%%%%%%%%%%
\subsection{FPT to $L>0$}
Define the FPT of the process \eqref{1dvjp} to $L>0$,
\begin{align*}
\tau
:=\inf\{t>0:X(t)=L\}.
\end{align*}
Since the process moves with velocity either $-v_{0}<0$ or $v_{1}>0$, the smallest $\tau$ could be is
\begin{align*}
\ts
:=L/v_{1}>0.
\end{align*}
Further, 
\begin{align*}
q:=\P(\tau=\ts)
=p_{1}e^{-\lambda_{1}\ts}\in[0,1).
\end{align*}
In words, $q$ is the probability that the process starts in the positive direction ($p_{1}\in[0,1]$) and that the process does not change direction before hitting $L$ ($e^{-\lambda_{1}t_{0}}\in(0,1)$).

Let ${M}(t)$ be the number of jumps of $J$ before time $t$ and let $\{s_{n}\}_{n\ge1}$ be the sequence of holding times of $J$ (and thus $J$ jumps at times $s_{1},s_{1}+s_{2},s_{1}+s_{2}+s_{3},\dots$). Then
\begin{align}\label{sum}
\P(\ts<\tau<\ts(1+\eps))
=\sum_{j=0}^{\infty}\P(\ts<\tau<\ts(1+\eps),{M}(\ts)=j).
\end{align}
It is immediate that the $j=0$ term is zero for $\eps$ sufficiently small (namely $\eps<1$),
\begin{align}\label{j0}
\P(\ts<\tau<\ts(1+\eps),{M}(\ts)=0)=0,\quad\text{if }0<\eps<1,
\end{align}
because if ${M}(\ts)=0$ and $J(0)=1$, then $\tau=\ts$ and if ${M}(\ts)=0$ and $J(0)=0$, then $\tau\ge2\ts$ almost surely. Further, it is straightforward to check that if $j\ge3$, then
\begin{align}\label{j3}
\P(\ts<\tau<\ts(1+\eps),{M}(\ts)=j)=\O(\eps^{2}),\quad\text{if }j\ge3.
\end{align}

%%%%%%%%%%%%%%%%%%%%%%%%%%%%%%%%%%%
We thus focus on the $j=1,2$ terms in the sum in \eqref{sum}. Looking first at the $j=1$ term, it is straightforward to check that
\begin{align*}
\P(\ts<\tau<\ts(1+\eps),{M}(\ts)=1,J(0)=1)=\O(\eps^{2}).%\quad\text{as }\eps\to0.
\end{align*}
It is also immediate that
\begin{align*}
&\P(\ts<\tau<\ts(1+\eps),{M}(\ts)=1|J(0)=0)\\
&\quad=\P(\ts<\tau<\ts(1+\eps),{M}(\ts(1+\eps))=1|J(0)=0)+\O(\eps^{2}).
\end{align*}
Now, if $\ts<\tau<\ts(1+\eps)$, ${M}(\ts(1+\eps))=1$, and $J(0)=0$, then we need that
\begin{align*}
-s_{1}v_{0}+(\ts(1+\eps)-s_{1})v_{1}>L,
\end{align*}
which is equivalent to $s_{1}<\frac{\eps L}{v_{0}+v_{1}}$ (using that $\ts=L/v_{1}$). Hence,
\begin{align*}
&\P(\ts<\tau<\ts(1+\eps),{M}(\ts(1+\eps))=1|J(0)=0)\\
&\quad=\P(s_{1}<\tfrac{\eps L}{v_{0}+v_{1}},s_{2}>\ts(1+\eps)-s_{1}|J(0)=0)\\
&\quad=\P(s_{1}<\tfrac{\eps L}{v_{0}+v_{1}},s_{2}>\ts|J(0)=0)+\O(\eps^{2})\\
&\quad=\big[1-e^{-\lambda_{0}(\frac{\eps L}{v_{0}+v_{1}})}\big]e^{-\lambda_{1}\ts}+\O(\eps^{2}).
\end{align*}
Putting this together, we have that
\begin{align}\label{j1}
\P(\ts<\tau<\ts(1+\eps),{M}(\ts)=1)
=p_{0}\big[1-e^{-\lambda_{0}(\frac{\eps L}{v_{0}+v_{1}})}\big]e^{-\lambda_{1}\ts}+\O(\eps^{2}).
\end{align}

%%%%%%%%%%%%%%%%%%%%%%%%%%%%%%%%%%%
Moving to the $j=2$ term in \eqref{sum}, we have
\begin{align*}
\P(\ts<\tau<\ts(1+\eps),{M}(\ts)=2,J(0)=0)=\O(\eps^{2}),
\end{align*}
and
\begin{align*}
&\P(\ts<\tau<\ts(1+\eps),{M}(\ts)=2|J(0)=1)\\
&\quad=\P(\ts<\tau<\ts(1+\eps),{M}(\ts(1+\eps))=2|J(0)=1)+\O(\eps^{2}).
\end{align*}
Now, if $\ts<\tau<\ts(1+\eps)$, ${M}(\ts(1+\eps))=2$, and $J(0)=1$, then we need that
\begin{align*}
s_{1}v_{1}-v_{0}s_{2}+(\ts(1+\eps)-(s_{1}+s_{2}))v_{1}>L,
\end{align*}
which is equivalent to $s_{2}<\frac{\eps L}{v_{0}+v_{1}}$ (again using that $\ts=L/v_{1}$). Hence,
\begin{align*}
&\P(\ts<\tau<\ts(1+\eps),{M}(\ts(1+\eps))=2|J(0)=1)\\
&\quad=\P(s_{2}<\tfrac{\eps L}{v_{0}+v_{1}},s_{1}+s_{2}<\ts(1+\eps),s_{1}+s_{2}+s_{3}>\ts(1+\eps)|J(0)=1)\\
&\quad=\P(s_{2}<\tfrac{\eps L}{v_{0}+v_{1}},s_{1}<\ts,s_{1}+s_{3}>\ts|J(0)=1)
+\O(\eps^{2})\\
&\quad=\P(s_{2}<\tfrac{\eps L}{v_{0}+v_{1}}|J(0)=1)\P(s_{1}<\ts,s_{1}+s_{3}>\ts|J(0)=1)
+\O(\eps^{2})\\
&\quad=\big[1-e^{-\lambda_{0}(\frac{\eps L}{v_{0}+v_{1}})}\big]\P(s_{1}<\ts,s_{1}+s_{3}>\ts|J(0)=1)
+\O(\eps^{2}).
\end{align*}
Now, 
\begin{align*}
\P(s_{1}<\ts,s_{1}+s_{3}>\ts|J(0)=1)
=\int_{0}^{\ts}e^{-\lambda_{1}(\ts-s)}\lambda_{1}e^{-\lambda_{1}s}\,\dd s
%=e^{-\lambda_{1}}\int_{0}^{\ts}\lambda_{1}\,\dd s
=\lambda_{1}\ts e^{-\lambda_{1}\ts}.
\end{align*}
Putting this together, we obtain
\begin{align}\label{j2}
\P(\ts<\tau<\ts(1+\eps),{M}(\ts)=2)
=p_{1}\lambda_{1}\ts e^{-\lambda_{1}\ts}\big[1-e^{-\lambda_{0}(\frac{\eps L}{v_{0}+v_{1}})}\big]+\O(\eps^{2}).
\end{align}

%%%%%%%%%%%%%%%%%%%%%%%%%%%%%%%%%%%
Therefore, by \eqref{sum}, \eqref{j0}, \eqref{j3}, \eqref{j1}, and \eqref{j2}, we have that
\begin{align*}%\label{shortex}
%\begin{split}
\P(\ts<\tau<\ts(1+\eps))
%&=p_{0}e^{-\lambda_{1}\ts}\Big[1-e^{-\lambda_{0}(\frac{\eps L}{v_{0}+v_{1}})}\Big]+p_{1}\lambda_{1}\ts e^{-\lambda_{1}\ts}\Big[1-e^{-\lambda_{0}(\frac{\eps L}{v_{0}+v_{1}})}\Big]+\O(\eps^{2})\\
&=(p_{0}+p_{1}\lambda_{1}\ts)e^{-\lambda_{1}\ts}\Big[1-e^{-\lambda_{0}(\frac{\eps L}{v_{0}+v_{1}})}\Big]
+\O(\eps^{2})\\
%&=\Big\{\frac{\lambda_{0} L e^{-\lambda_{1} L/v_{1}} \left(\frac{\lambda_{1} L }{v_{1}}p_{1}+p_{0}\right)}{v_{0}+v_{1}}\Big\}\eps+\O(\eps^{2})\\
&=(1-q)\alpha\eps+\O(\eps^{2}),
%\end{split}
\end{align*}
where
\begin{align}\label{alpha1}
%q=p_{1}e^{-\lambda_{1}L/v1},
%\quad
\alpha
%=(1-q)^{-1}\Big\{\frac{\lambda_{0} L e^{-\lambda_{1} L/v_{1}} \left(\frac{\lambda_{1} L }{v_{1}}p_{1}+p_{0}\right)}{v_{0}+v_{1}}\Big\}
=\frac{\lambda_{0} L (\lambda_{1} L p_{1}-p_{1} v_{1}+v_{1})}{v_{1} (v_{0}+v_{1}) (e^{\lambda_{1} L/v_{1}}-p_{1})}.
\end{align}
Summarizing, \eqref{short} holds with $p=1$, $t_{0}=L/v_{1}$, $q=p_{1}e^{-\lambda_{1}t_{0}}$, and $\alpha$ in \eqref{alpha1}.

%%%%%%%%%%%%%%%%%%%%%%%%%%%%%%%%%%%
\subsection{Escape the interval $(-L_{0},L_{1})$}\label{section interval}

Define the first time that the process \eqref{1dvjp} escapes the interval $(-L_{0},L_{1})$,
\begin{align}\label{tauinterval}
\tau
:=\inf\{t>0:X(t)\notin(-L_{0},L_{1})\}.
\end{align}
Notice that the earliest time that the process could reach $L_{1}$ is $L_{1}/v_{1}$ and the earliest time that the process could reach $-L_{0}$ is $L_{0}/v_{0}$. Hence, if $L_{1}/v_{1}<L_{0}/v_{0}$, then $\ts=L_{1}/v_{1}$, and 
\begin{align*}
\P(\ts<\tau<\ts(1+\eps),X(\tau)=-L)
=0\quad\text{if }\ts(1+\eps)<L_{0}/v_{0}.
\end{align*}
Therefore, we can merely apply the analysis from the previous subsection to conclude that \eqref{short} holds with $p=1$, $t_{0}=L_{1}/v_{1}$, $q=p_{1}e^{-\lambda_{1}t_{0}}$, and $\alpha$ in \eqref{alpha1} with $L$ replaced by $L_{1}$. The case where $L_{1}/v_{1}>L_{0}/v_{0}$ is similar.

Hence, consider the case that $L_{0}=L_{1}=L$, $v_{0}=v_{1}=v$. Therefore,
\begin{align}\label{q1dball}
\ts=L/v,
\quad
q=p_{0}e^{-\lambda_{0}\ts}+p_{1}e^{-\lambda_{1}\ts}
\end{align}
Following the argument of the previous subsection, it follows from symmetry that
\begin{align*}
\P(\ts<\tau<\ts(1+\eps))
&=(p_{0}+p_{1}\lambda_{1}\ts)e^{-\lambda_{1}\ts}\big[1-e^{-\lambda_{0}(\frac{\eps L}{2v})}\big]\\
&\quad+(p_{1}+p_{0}\lambda_{0}\ts)e^{-\lambda_{0}\ts}\big[1-e^{-\lambda_{1}(\frac{\eps L}{2v})}\big]+\O(\eps^{2})\\
&=(1-q)\alpha\eps+\O(\eps^{2}),
\end{align*}
where
\begin{align}\label{alpha1dball}
\alpha
=\frac{L \Big[\lambda_{1} e^{-\frac{\lambda_{0} L}{v}} (\lambda_{0} L p_{0}+p_{1} v)+\lambda_{0} e^{-\frac{\lambda_{1} L}{v}} (\lambda_{1} L p_{1}+p_{0} v)\Big]}{2 v^2 \Big[1-p_{0} e^{-\frac{\lambda_{0} L}{v}}-p_{1} e^{-\frac{\lambda_{1} L}{v}}\Big]}.
\end{align}
Summarizing, \eqref{short} holds with $p=1$, $t_{0}$ and $q$ in \eqref{q1dball}, and $\alpha$ in \eqref{alpha1dball}.

%%%%%%%%%%%%%%%%%%%%%%%%%%%%%%%%%%%
\subsection{Numerical simulation: mean and full distribution}\label{num1}

%%%%%%%%%%%%%%%%%%%%%%%%%%%%%%%%%%%
\begin{figure}[t]%[htp]
\centering
\includegraphics[width=.9\linewidth]{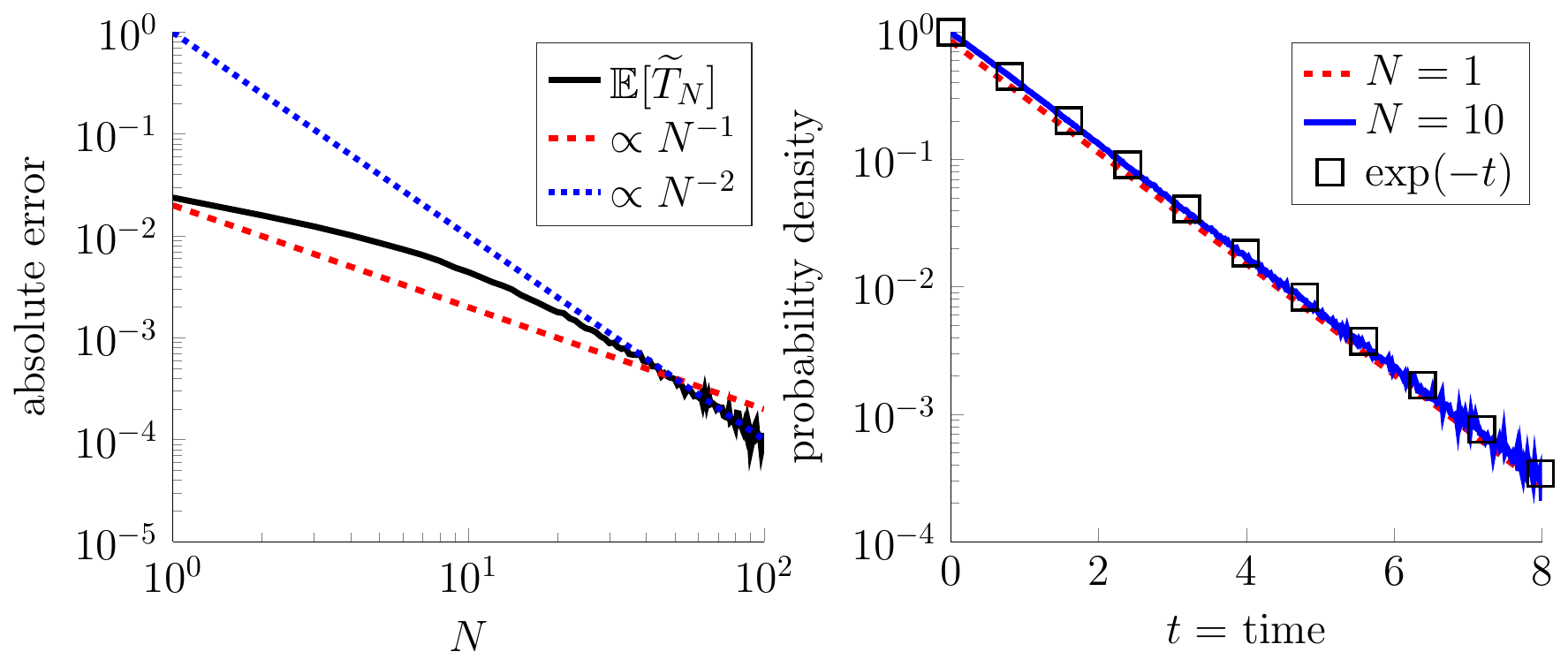}
\caption{Run and tumble in 1d. The left panel plots the absolute error \eqref{error1} for the mean of $\Tt_{N}$ as a function of $N$. The right panel plots the probability density of \eqref{density1} for $N=1,10$. In both panels, we take $L=v=1$ and $\lambda=3$.}
\label{fig1d}
\end{figure}
%%%%%%%%%%%%%%%%%%%%%%%%%%%%%%%%%%%

To illustrate our results, we perform stochastic simulations of the process \eqref{1dvjp} to generate statistically exact realizations of the fastest FPT, $T_{N}:=\{\tau_{1},\dots,\tau_{N}\}$, where $\{\tau_{1},\dots,\tau_{N}\}$ are $N$ iid realizations of the FPT in \eqref{tauinterval}. The details of our stochastic simulation algorithm are in the Appendix.

As in section~\ref{math}, recall that
\begin{align*}
T_{N}
%:=\min\{\tau_{1},\dots,\tau_{N}\}
=_{\dist}\ts+\xi_{N}(\Tt_{N}-\ts),
\end{align*}
where $\xi_{N}\in\{0,1\}$ satisfies $\P(\xi_{N}=1)=(1-q)^{N}$,
\begin{align*}
\Tt_{N}
:=\min\{\taut_{1},\dots,\taut_{N}\},
\end{align*}
and $\{\taut_{1},\dots,\taut_{N}\}$ are $N$ iid realizations of $\tau$ conditional that $\tau>\ts$. Since the distribution of $T_{N}$ is trivial in the case $\xi_{N}=0$, we focus on $\Tt_{N}$.

Consider the symmetric case that $L_{0}=L_{1}=L$, $v_{0}=v_{1}=v$, and $\lambda_{0}=\lambda_{1}=\lambda$. Since the interval $(-L,L)\subset\R$ is bounded, it is follows that $\E[T_{N}]<\infty$ for any $N\ge1$. To see this, note that the process must escape the interval, $|X(t)|>L$, if the jump process ever spends more than time $2L/v=2\ts$ in a particular state before jumping. Note further that each time the jump process jumps, the probability that it spends more than time $2\ts$ in its new state is $\overline{p}:=e^{-2\lambda\ts}\in(0,1)$. Hence, by conditioning on how many jumps $J$ takes before it spends more than time $2\ts$ in a particular state, we obtain
\begin{align}\label{finite}
\E[T_{N}]
\le\E[\tau_{1}]
\le2\ts\sum_{j=0}^{\infty}(j+1)\overline{p}(1-\overline{p})^{j}
=\frac{2\ts}{\overline{p}}<\infty.
\end{align}

Hence, Theorem~\ref{moments} implies that
\begin{align*}
\E[\Tt_{N}]
%=\ts(1+a_{N})+o(\ts N^{-1})
=\ts\Big(1+\frac{1}{\alpha N}\Big)
+o(\ts N^{-1})\quad\text{as }N\to\infty,
\end{align*}
where $\ts=L/v$ and $\alpha$ is in \eqref{alpha1dball}. In the left panel of Figure~\ref{fig1d}, we plot the absolute error of our approximation $\ts(1+\frac{1}{\alpha N})$,
\begin{align}\label{error1}
\Big|\E[\Tt_{N}]-\ts\Big(1+\frac{1}{\alpha N}\Big)\Big|,
\end{align}
as a function of $N$, where $\E[\Tt_{N}]$ is calculated from Monte Carlo simulations. In agreement with our theory, the absolute error decays faster than $N^{-1}$ as $N$ grows.

The right panel of Figure~\ref{fig1d} plots the probability density of
\begin{align}\label{density1}
\Sigma_{N}
=\frac{\Tt_{N}-\ts}{\ts a_{N}}
=\frac{\alpha N(\Tt_{N}-\ts)}{\ts}
\end{align}
calculated from Monte Carlo simulations for $N=1$ and $N=10$. In agreement with Theorem~\ref{main}, this distribution converges to an exponential distribution with unit mean as $N$ grows. Interestingly, this figure shows that the convergence to an exponential distribution is quite fast in this example.

%%%%%%%%%%%%%%%%%%%%%%%%%%%%%%%%%%%
\section{Two-dimensional run and tumble}\label{section 2d}

Consider a 2d run and tumble process $X(t)\in\R^{2}$ with constant speed $v>0$ and constant tumbling (switching) rate $\lambda>0$. That is, the process starts at $X(0)=0\in\R^{2}$ and moves at speed $v$ in a uniform random direction until an exponentially distributed time with rate $\lambda$, at which point the process chooses a new random direction and then moves at speed $v$ until another exponentially distributed time with rate $\lambda$, and so on. For simplicity, we assume that each random direction is chosen independently and uniformly, but it would be straightforward to relax this assumption and allow for correlations between successive random directions (see below).

Let $\tau$ be the first time the process escapes a disk of radius $L>0$,
\begin{align}\label{tau2d}
\tau
:=\inf\{t>0:\|X(t)\|>L\},
\end{align}
where $\|\cdot\|$ denotes the standard Euclidean length. By defining dimensionless time and space variables, $t\to (v/L)t$ and $x\to x/L$, we can without loss of generality consider a unit velocity and unit disk, $v=L=1$, and a general dimensionless switching rate $\lambda$. Hence, $\ts=1$.

As above, we have that
\begin{align}\label{sum2d}
\P(1<\tau<1+\eps)
=\sum_{j=0}^{\infty}\P(1<\tau<1+\eps,M(1)=j),
\end{align}
where $M(t)$ is the number of switches before time $t$. It is immediate that the $j=0$ term vanishes since $\tau=1$ if $M(1)=0$.

Let us now focus on the $j=1$ term. Notice that if $1<\tau<1+\eps$ and $M(1)=1$, then this puts a constraint on the angle that is chosen upon the first switch. To find this constraint, suppose the first switch occurs at time $s\in(0,1)$. Without loss of generality, we choose a coordinate system so that the initial direction of the process is directly in the vertical direction. That is, the initial direction of the process is in the $y$ direction,
\begin{align*}
X(t)=(0,t)\in\R^{2},\quad t\in[0,s].
\end{align*}
Notice that if $1<\tau<1+\eps$ and $M(1+\eps)=1$ and the first switch happens at time $s\in(0,1)$, then the location of the process at time $1+\eps$ will be on the disk of radius $1+\eps-s$ that is centered at $(0,s)\in\R^{2}$. That is,
\begin{align*}
X(1+\eps)\in
B:=\{(x,y)\in\R^{2}:x^{2}+(y-z)^{2}=(1+\eps-s)^{2}\}\subset\R^{2}.
\end{align*}
To find the probability that the angle chosen at time $s$ is such that $1<\tau<1+\eps$, we must find the fraction of the boundary of $B$ that lies outside the unit disk centered at the origin. An elementary geometry exercise yields that this fraction of the boundary is %probability is
\begin{align}\label{frac2}
%\P(1<\tau<1+\eps\,|\,M(1+\eps)=1,s_{1}=s)
%=
\begin{cases}
\pi^{-1}\arccos((x-s)/r) & \text{if }s\in(\eps/2,1),\\
1& \text{if }s\in(0,\eps/2],
\end{cases}
%=\frac{\eps(2+\eps)}{4s(1-s+\eps)}
%=\frac{\eps}{2s(1-s)}+\O(\eps^{2}).
\end{align}
where $r=1+\eps-s$ and $x=(s^{2}-r^{2}+1)/(2s)$.

For simplicity, we have assumed that when the process switches direction, it chooses a new angle uniformly. Hence, the probability that $1<\tau<1+\eps$ given the first switch is at time $s\in(0,1)$ and that $M(1+\eps)=1$ is the fraction in \eqref{frac2}. Therefore, by conditioning on the time of the first switch, we obtain
\begin{align}
\P(1<\tau<1+\eps,M(1+\eps)=1)
%&\quad=\int_{0}^{1}\lambda e^{-\lambda s}e^{-\lambda(1-s)}\P(1<\tau<1+\eps\,|\,M(1+\eps)=1,s_{1}=s)\,\dd s\\
&=\lambda e^{-\lambda}\Big[\frac{\eps}{2}+\frac{1}{\pi}\int_{\eps/2}^{1}\arccos((x-s)/r)\,\dd s\Big]\label{integral}\\
&=\lambda e^{-\lambda}\sqrt{2}\sqrt{\eps}+\O(\eps).\nonumber
\end{align}
We note that if we assumed that when the process switches direction, it chooses a new angle according to some nonuniform distribution (thus allowing for correlations between successive angles), this would merely change the integral in \eqref{integral}.

It is straightforward to check that the $j\ge2$ terms in \eqref{sum2d} are higher order, and so we obtain that \eqref{short} holds with $p=1/2$, $\ts=1$, $q=e^{-\lambda}$, and 
\begin{align}\label{alpha2d}
\alpha=\lambda e^{-\lambda}\sqrt{2}/(1-q).
\end{align}

%%%%%%%%%%%%%%%%%%%%%%%%%%%%%%%%%%%
\subsection{Numerical simulation: mean and full distribution}

%%%%%%%%%%%%%%%%%%%%%%%%%%%%%%%%%%%
\begin{figure}[t]%[htp]
\centering
\includegraphics[width=.9\linewidth]{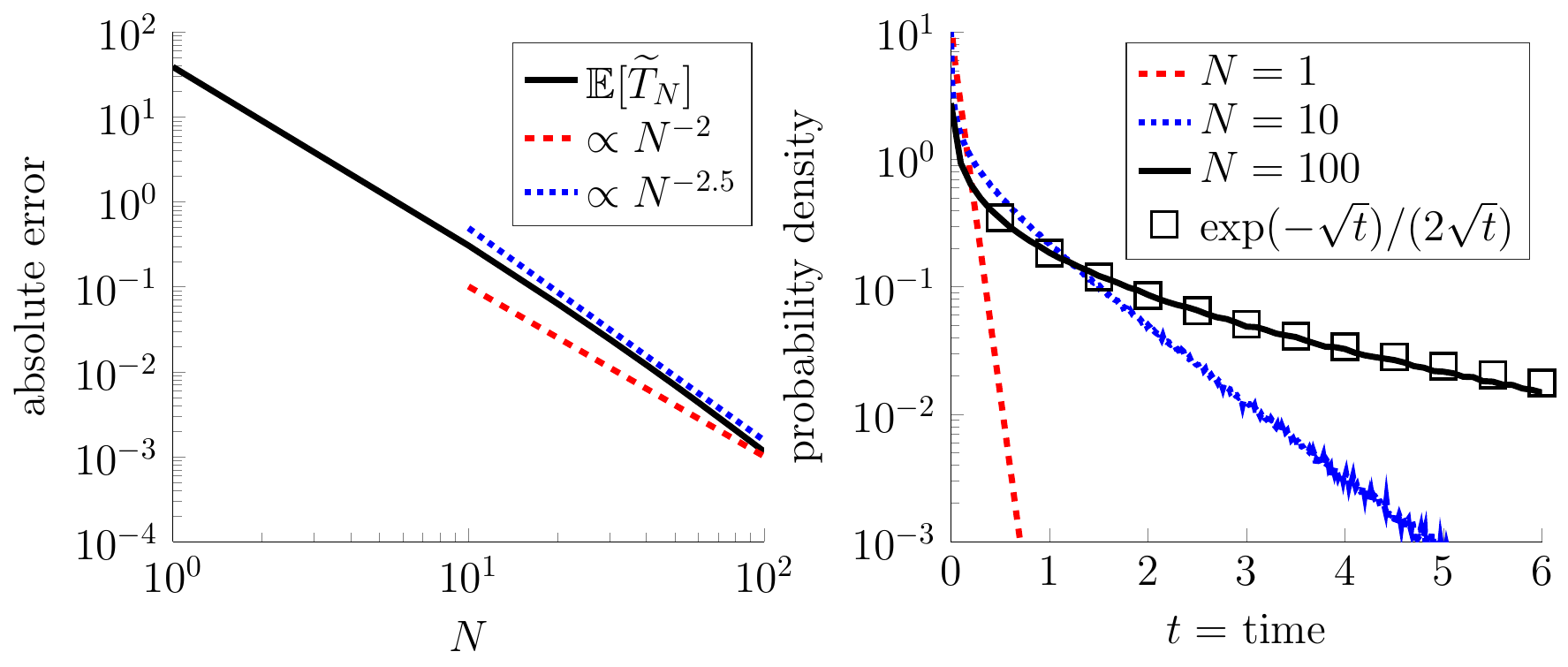}
\caption{Run and tumble in 2d. The left panel plots the absolute error \eqref{error2} for the mean of $\Tt_{N}$ as a function of $N$. The right panel plots the probability density of \eqref{density2} for $N=1,10,100$. In both panels, we take $\lambda=3$.}
\label{fig2d}
\end{figure}
%%%%%%%%%%%%%%%%%%%%%%%%%%%%%%%%%%%

As in section~\ref{num1}, we perform stochastic simulations of the 2d run and tumble process to generate statistically exact realizations of the fastest FPT, $T_{N}:=\{\tau_{1},\dots,\tau_{N}\}$, where $\{\tau_{1},\dots,\tau_{N}\}$ are $N$ iid realizations of the FPT in \eqref{tau2d}. We again focus on $\Tt_{N}$.

It follows from the same argument as in \eqref{finite} that $\E[T_{N}]<\infty$. Therefore, since $p=1/2$ in this case, Theorem~\ref{moments} implies that
\begin{align*}
\E[\Tt_{N}]
%=\ts(1+a_{N})+o(\ts N^{-1})
=\ts\Big(1+\frac{2}{(\alpha N)^{2}}\Big)
+o(\ts N^{-2})\quad\text{as }N\to\infty,
\end{align*}
where $\ts=1$ and $\alpha$ is in \eqref{alpha2d}. In the left panel of Figure~\ref{fig1d}, we plot the absolute error of our approximation $\ts(1+\frac{2}{(\alpha N)^{2}})$,
\begin{align}\label{error2}
\Big|\E[\Tt_{N}]-\ts\Big(1+\frac{2}{(\alpha N)^{2}}\Big)\Big|,
\end{align}
as a function of $N$, where $\E[\Tt_{N}]$ is calculated from Monte Carlo simulations. In agreement with our theory, the absolute error decays faster than $N^{-2}$ as $N$ grows.

The right panel of Figure~\ref{fig2d} plots the probability density of
\begin{align}\label{density2}
\Sigma_{N}
=\frac{\Tt_{N}-\ts}{\ts a_{N}}
=\frac{(\alpha N)^{2}(\Tt_{N}-\ts)}{\ts}
\end{align}
calculated from Monte Carlo simulations for $N=1,10,100$. In agreement with Theorem~\ref{main}, this density converges to $\exp(-\sqrt{t})/(2\sqrt{t})$ as $N$ grows.

%%%%%%%%%%%%%%%%%%%%%%%%%%%%%%%%%%%
\section{Three-dimensional run and tumble}\label{section 3d}

Now consider a 3d run and tumble process $X(t)\in\R^{3}$ with constant speed $v>0$ and constant tumbling (switching) rate $\lambda>0$. That is, the process starts at $X(0)=0\in\R^{3}$ and moves at speed $v$ in a uniform random direction until an exponentially distributed time with rate $\lambda$, at which point the process chooses a new random direction and then moves at speed $v$ until another exponentially distributed time with rate $\lambda$, and so on. As in section~\ref{section 2d}, we assume that each random direction is chosen independently and uniformly, but it would be straightforward to relax this assumption and allow for correlations between successive random directions (see below).

Let $\tau$ be the first time the process escapes a sphere of radius $L>0$,
\begin{align}\label{tau3d}
\tau
:=\inf\{t>0:\|X(t)\|>L\},
\end{align}
where $\|\cdot\|$ denotes the standard Euclidean length. Again, by defining dimensionless time and space variables, $t\to (v/L)t$ and $x\to x/L$, we can without loss of generality consider a unit velocity and unit sphere, $v=L=1$, and a general dimensionless switching rate $\lambda$. Hence, $\ts=1$.

As above, we have that
\begin{align}\label{sum3}
\P(1<\tau<1+\eps)
=\sum_{j=0}^{\infty}\P(1<\tau<1+\eps,M(1)=j).
\end{align}
It is immediate that the $j=0$ term vanishes since $\tau=1$ if $M(1)=0$. Hence, let us focus on the $j=1$ term in \eqref{sum3}. Notice that if $1<\tau<1+\eps$ and $M(1)=1$, then this puts a constraint on the angle that is chosen upon the first switch. To find this constraint, suppose the first switch occurs at time $s\in(0,1)$. Without loss of generality, we choose a coordinate system so that the initial direction of the process is directly toward the north pole. That is, the initial direction of the process is in the $z$ direction,
\begin{align*}
X(t)=(0,0,t)\in\R^{3},\quad t\in[0,s].
\end{align*}
Notice that if $1<\tau<1+\eps$ and $M(1+\eps)=1$ and the first switch happens at time $s\in(0,1)$, then the location of the process at time $1+\eps$ will be on the sphere of radius $1+\eps-s$ that is centered at $(0,0,s)\in\R^{3}$. That is,
\begin{align*}
X(1+\eps)\in
B:=\{(x,y,z)\in\R^{3}:x^{2}+y^{2}+(z-s)^{2}=(1+\eps-s)^{2}\}\subset\R^{3}.
\end{align*}
To find the probability that the angle chosen at time $s$ is such that $1<\tau<1+\eps$, we must find the fraction of the surface area of $B$ that lies outside the unit sphere centered at the origin. An elementary geometry exercise yields that this fraction of surface area is% probability is
\begin{align}\label{frac3}
%\P(1<\tau<1+\eps\,|\,M(1+\eps)=1,s_{1}=s)
%=
\begin{cases}
\frac{\eps(2+\eps)}{4s(1-s+\eps)} & \text{if }s\in(\eps/2,1),\\
1& \text{if }s\in(0,\eps/2].
\end{cases}
%=\frac{\eps(2+\eps)}{4s(1-s+\eps)}
%=\frac{\eps}{2s(1-s)}+\O(\eps^{2}).
\end{align}

As in the 2d run and tumble above, we have assumed that when the process switches direction, it chooses a new angle uniformly. Hence, the probability that $1<\tau<1+\eps$ given the first switch is at time $s\in(0,1)$ and that $M(1+\eps)=1$ is the fraction in \eqref{frac3}. Therefore, 
\begin{align}
&\P(1<\tau<1+\eps,M(1+\eps)=1)\nonumber\\
%&\quad=\int_{0}^{1}\lambda e^{-\lambda s}e^{-\lambda(1-s)}\P(1<\tau<1+\eps\,|\,M(1+\eps)=1,s_{1}=s)\,\dd s\\
&\quad=\lambda e^{-\lambda}\Big[\frac{\eps}{2}+\int_{\eps/2}^{1}\frac{\eps(2+\eps)}{4s(1-s+\eps)}\,\dd s\Big]\label{integral3}\\
%&\quad=\frac{1}{4} \lambda e^{-\lambda }  \eps  \left(\frac{(\eps +2) \ln \left(\frac{\eps +2}{\eps ^2}\right)}{\eps +1}+2\right)\\
&\quad=\frac{1}{2} \lambda e^{-\lambda }  \eps  (-2 \ln (\eps )+1+\ln (2))+\O(\eps^{2}\ln(1/\eps)).\nonumber
\end{align}
If we assumed that when the process switches direction, it chooses a new angle according to some nonuniform distribution (thus allowing for correlations between successive angles), this would merely change the integral in \eqref{integral3}.

It is straightforward to check that the $j\ge2$ terms in \eqref{sum3} are higher order, and so 
\begin{align*}
\P(1<\tau<1+\eps)
%&=\frac{1}{2} \lambda e^{-\lambda }  \eps  (-2 \ln (\eps )+1+\ln (2))+h.o.t.\\
&=\lambda e^{-\lambda}\ln(1/\eps)\eps+\O(\eps).
\end{align*}
Hence, we obtain that \eqref{shortlog} holds with $p=1$, $\ts=1$, $q=e^{-\lambda}$, and 
\begin{align*}
\alpha=\lambda e^{-\lambda}/(1-q).
\end{align*}

%%%%%%%%%%%%%%%%%%%%%%%%%%%%%%%%%%%
\subsection{Numerical simulation: mean and full distribution}

%%%%%%%%%%%%%%%%%%%%%%%%%%%%%%%%%%%
\begin{figure}[t]%[htp]
\centering
\includegraphics[width=.9\linewidth]{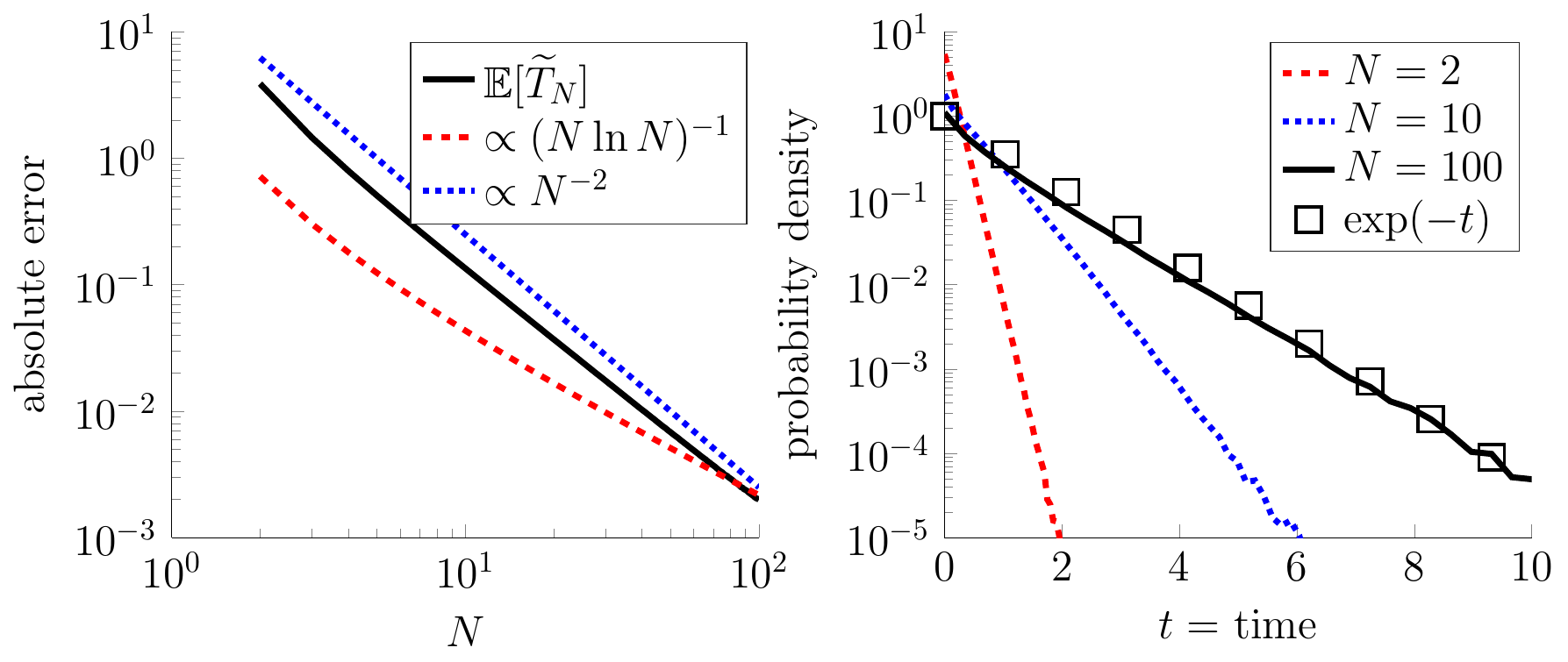}
\caption{Run and tumble in 3d. The left panel plots the absolute error \eqref{error3} for the mean of $\Tt_{N}$ as a function of $N$. The right panel plots the probability density of \eqref{density3} for $N=2,10,100$. In both panels, we take $\lambda=3$.}
\label{fig3d}
\end{figure}
%%%%%%%%%%%%%%%%%%%%%%%%%%%%%%%%%%%

As in section~\ref{num1}, we perform stochastic simulations of the 3d run and tumble process to generate statistically exact realizations of the fastest FPT, $T_{N}:=\min\{\tau_{1},\dots,\tau_{N}\}$ in \eqref{TN}, where $\{\tau_{1},\dots,\tau_{N}\}$ are $N$ iid realizations of the FPT in \eqref{tau3d}. We again focus on $\Tt_{N}$.

It follows from the same argument as in \eqref{finite} that $\E[T_{N}]<\infty$. Therefore, Theorem~\ref{moments} implies that
\begin{align*}
\E[\Tt_{N}]
%=\ts(1+a_{N})+o(\ts N^{-1})
=\ts\Big(1+\frac{1}{\alpha N\ln N}\Big)
+o(\ts (N\ln N)^{-1})\quad\text{as }N\to\infty,
\end{align*}
where $\ts=1$ and $\alpha$ is in \eqref{alpha2d}. In the left panel of Figure~\ref{fig1d}, we plot the absolute error of our approximation $\ts(1+\frac{1}{\alpha N\ln N})$,
\begin{align}\label{error3}
\Big|\E[\Tt_{N}]-\ts\Big(1+\frac{1}{\alpha N\ln N}\Big)\Big|,
\end{align}
as a function of $N$, where $\E[\Tt_{N}]$ is calculated from Monte Carlo simulations. In agreement with our theory, the absolute error decays faster than $1/(N\ln N)$ as $N$ grows.

The right panel of Figure~\ref{fig3d} plots the probability density of
\begin{align}\label{density3}
\Sigma_{N}
=\frac{\Tt_{N}-\ts}{\ts a_{N}}
=\frac{\alpha N\ln N(\Tt_{N}-\ts)}{\ts}
\end{align}
calculated from Monte Carlo simulations for $N=2,10,100$. In agreement with Theorem~\ref{main}, this distribution converges to an exponential distribution with unit mean as $N$ grows.

%%%%%%%%%%%%%%%%%%%%%%%%%%%%%%%%%%%
\section{One-dimensional linear PDMP}\label{section 4B}

In each of the previous examples, the velocity was constant between jumps. We now consider an example in which the velocity varies continuously between jumps. Specifically, consider the linear PDMP
\begin{align*}
\frac{\dd}{\dd t}X(t)
&=\begin{cases}
\mu(A_{0}-X(t)) & \text{if }J(t)=0,\\
\mu(A_{1}-X(t)) & \text{if }J(t)=1,
\end{cases}
\end{align*}
where $J(t)\in\{0,1\}$ is a continuous-time Markov jump process with jump rate $\lambda>0$ and $A_{0}\neq A_{1}$. This PDMP has been used to study both gene expression \cite{smiley2010} and storage systems \cite{boxma2005}.

Since we can nondimensionalize the problem by shifting and rescaling space $x\to(x-A_{0})/(A_{1}-A_{0})$ and rescaling time $t\to\mu t$, it is enough to consider the dimensionless problem
\begin{align}\label{Boxma}
\frac{\dd}{\dd t}X(t)
&=\begin{cases}
-X(t) & \text{if }J(t)=0,\\
1-X(t) & \text{if }J(t)=1,
\end{cases}
\end{align}
where $J(t)\in\{0,1\}$ now jumps with a general dimensionless rate $\lambda>0$. For simplicity, suppose $X(0)=1$ (the case of a general initial condition is similar). 

Define the first time $X(t)$ reaches some threshold $\theta\in(0,1)$,
\begin{align}\label{tau4B}
\tau
:=\inf\{t>0:X(t)=\theta\}.
\end{align}
Hence, the fastest $X(t)$ could reach $\theta$ is
\begin{align*}
\ts
=\ln(1/\theta)>0,
\end{align*}
and
\begin{align*}
q:=\P(\tau=\ts)
=p_{0}e^{-\lambda\ts}\in[0,1),
\end{align*}
where $p_{0}:=\P(J(0)=0)=:1-p_{1}$. In words, $q$ is the probability that the process starts in the ``down'' direction ($p_{0}$) and that it does not switch before it reaches the threshold ($e^{-\lambda t_{0}}$). Figure~\ref{figschem4} illustrates many realizations of this process.

%%%%%%%%%%%%%%%%%%%%%%%%%%%%%%%%%%%
\begin{figure}[t]%[htp]
\centering
\includegraphics[width=.5\linewidth]{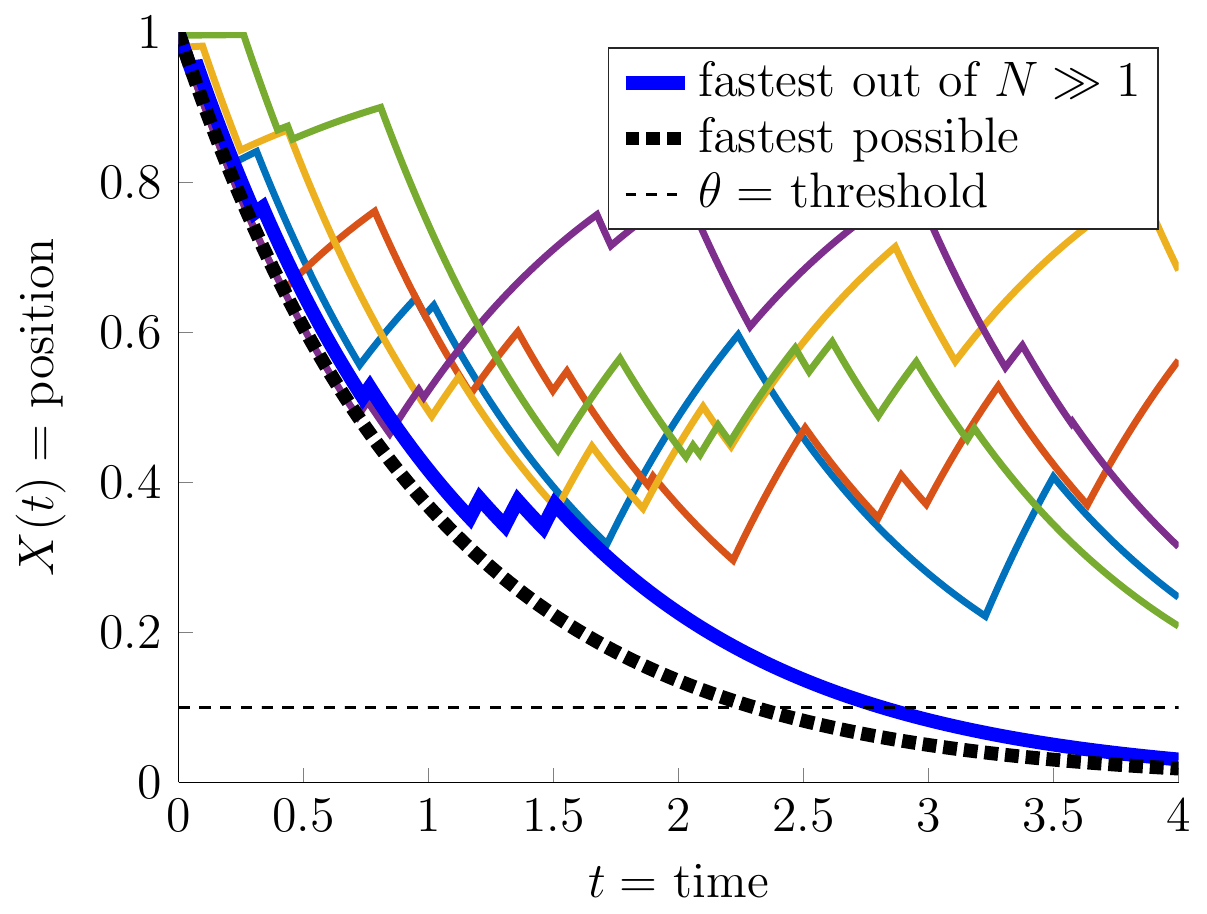}
\caption{Realizations of the linear PDMP in \eqref{Boxma}. The thin curves illustrate typical realizations of \eqref{Boxma}, the thick dashed black curve illustrates the fastest a realization could reach the threshold $\theta\in(0,1)$, and the thick blue curve illustrates the fastest realization to reach $\theta$ out of $N\gg1$ trajectories.}
\label{figschem4}
\end{figure}
%%%%%%%%%%%%%%%%%%%%%%%%%%%%%%%%%%%

As above, it is straightforward to check that
\begin{align*}
\P(\ts<\tau<\ts(1+\eps))
&=\sum_{j=0}^{\infty}\P(\ts<\tau<\ts(1+\eps),M(\ts)=j)\\
&=\P(\ts<\tau<\ts(1+\eps),M(\ts)=2,J(0)=0)\\
&\quad+\P(\ts<\tau<\ts(1+\eps),M(\ts)=1,J(0)=1)+\O(\eps)^{2}.
\end{align*}
Now, if $\ts<\tau<\ts(1+\eps)$, $J(0)=0$, and the first jump occurs at time $s_{1}>0$, then the next sojourn time $s_{2}$ in state $J=1$ must be such that
\begin{align*}
e^{-(\ts(1+\eps)-s_{1}-s_{2})}(1-(1-e^{-s_{1}})e^{-s_{2}})<\theta.
\end{align*}
Solving this for $s_{2}$, we find that
\begin{align*}
0<s_{2}<\ln \left(\left(e^{s_{1}}-1\right) \theta ^{\eps}+1\right)-s_{1}-\eps \ln (\theta ).
\end{align*}
By conditioning on the values of $s_{1}$ and $s_{2}$, we therefore obtain
\begin{align*}
&\P(\ts<\tau<\ts(1+\eps),M(\ts(1+\eps))=2,J(0)=0)\\
&\quad=p_{0}\int_{0}^{\ts}\int_{0}^{\ln \left(\left(e^{s_{1}}-1\right) \theta ^{\eps}+1\right)-s_{1}-\eps \ln (\theta )}\lambda e^{-\lambda s_{1}}\lambda e^{-\lambda s_{2}}e^{-\lambda(\ts-s_{1}-s_{2})}\,\dd s_{2}\,\dd s_{1}\\
&\quad=p_{0}\lambda^{2}\theta^{\lambda}(\theta-1)\ln(\theta)\eps+\O(\eps^{2}).
\end{align*}

Similary, if $\ts<\tau<\ts(1+\eps)$, $J(0)=1$, $M(\ts(1+\eps))=1$ and the first two sojourn times are $s_{1}$ and $s_{2}$, then
\begin{align*}
e^{-(\ts(1+\eps)-s_{1})}<\theta
\quad\text{and}\quad
s_{2}>\ts(1+\eps)-s_{1}.
\end{align*}
Hence,
\begin{align*}
\P(\ts<\tau<\ts(1+\eps),M(\ts)=1,J(0)=1)
&=(1-e^{-\lambda\eps\ln(1/\theta)})e^{-\lambda\ln(1/\theta)}+\O(\eps^{2})\\
&=\lambda\theta^{\lambda}\ln(1/\theta)\eps+\O(\eps^{2}).
\end{align*}

Therefore, we obtain that \eqref{short} holds with $p=1$, $\ts=\ln(1/\theta)$, $q=p_{0}e^{-\lambda \ts}$, and 
\begin{align}\label{alpha4B}
\alpha=\Big[p_{0}\lambda^{2}\theta^{\lambda}(1-\theta)\ln(1/\theta)
+p_{1}\lambda\theta^{\lambda}\ln(1/\theta)\Big]/(1-q).
\end{align}

%%%%%%%%%%%%%%%%%%%%%%%%%%%%%%%%%%%
\subsection{Numerical simulation: mean and full distribution}

%%%%%%%%%%%%%%%%%%%%%%%%%%%%%%%%%%%
\begin{figure}[t]%[htp]
\centering
\includegraphics[width=.9\linewidth]{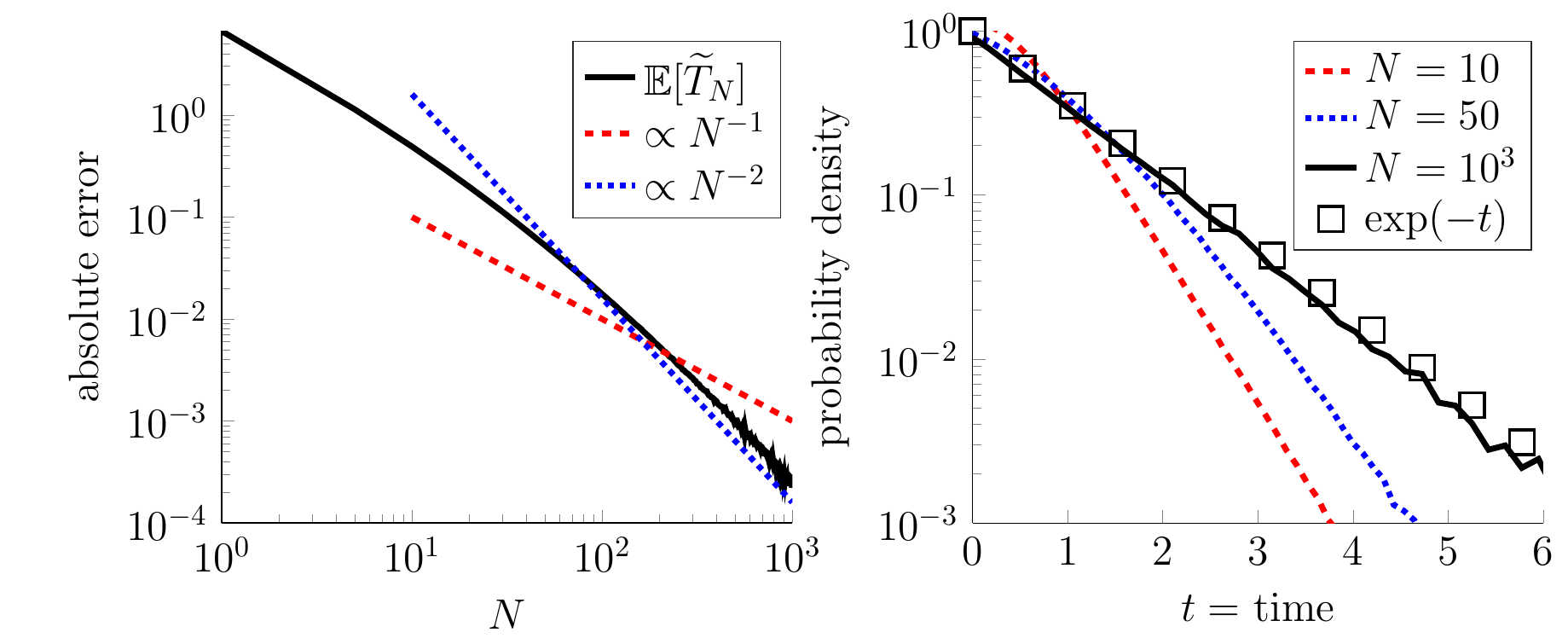}
\caption{Linear PDMP in \eqref{Boxma}. The left panel plots the absolute error \eqref{error4} for the mean of $\Tt_{N}$ as a function of $N$. The right panel plots the probability density of \eqref{density4} for $N=10,50,10^{3}$. In both panels, we take $\lambda=3$ and $p_{0}=p_{1}=0.5$.}
\label{fig4B}
\end{figure}
%%%%%%%%%%%%%%%%%%%%%%%%%%%%%%%%%%%

As in section~\ref{num1}, we perform stochastic simulations of the process \eqref{Boxma} to generate statistically exact realizations of the fastest FPT, $T_{N}:=\{\tau_{1},\dots,\tau_{N}\}$, where $\{\tau_{1},\dots,\tau_{N}\}$ are $N$ iid realizations of the FPT in \eqref{tau4B}. We again focus on $\Tt_{N}$.

It follows from a very similar argument to \eqref{finite} that $\E[T_{N}]<\infty$. Therefore, Theorem~\ref{moments} implies that
\begin{align*}
\E[\Tt_{N}]
%=\ts(1+a_{N})+o(\ts N^{-1})
=\ts\Big(1+\frac{1}{\alpha N}\Big)
+o(\ts N^{-1})\quad\text{as }N\to\infty,
\end{align*}
where $\ts=\ln(1/\theta)$ and $\alpha$ is in \eqref{alpha4B}. In the left panel of Figure~\ref{fig4B}, we plot the absolute error of our approximation $\ts(1+\frac{1}{\alpha N})$,
\begin{align}\label{error4}
\Big|\E[\Tt_{N}]-\ts\Big(1+\frac{1}{\alpha N}\Big)\Big|,
\end{align}
as a function of $N$, where $\E[\Tt_{N}]$ is calculated from Monte Carlo simulations. In agreement with our theory, the absolute error decays faster than $N^{-1}$ as $N$ grows.

The right panel of Figure~\ref{fig4B} plots the probability density of
\begin{align}\label{density4}
\Sigma_{N}
=\frac{\Tt_{N}-\ts}{\ts a_{N}}
=\frac{(\alpha N)^{2}(\Tt_{N}-\ts)}{\ts}
\end{align}
calculated from Monte Carlo simulations for $N=10,50,10^{3}$. In agreement with Theorem~\ref{main}, this distribution converges to an exponential distribution with unit mean as $N$ grows.

%%%%%%%%%%%%%%%%%%%%%%%%%%%%%%%%%%%
\section{Discussion}

In this paper, we studied extreme FPTs of PDMPs. We proved general theorems which yield the approximate distribution of such extreme FPTs based on the short time asymptotic behavior of a single FPT. We then applied these general results to some canonical PDMPs.

In addition to the numerous processes which have been modeled by PDMPs \cite{bressloff2017, bena06, teel15, yinbook}, this work was motivated by results on extreme FPTs of diffusive searchers. For the case of $N\gg1$ searchers diffusing in 1d with diffusivity $D>0$, Weiss et al.\ \cite{weiss1983} showed in 1983 that the mean fastest FPT satisfies
\begin{align}\label{result}
\E[T_{N}]
\sim\frac{L^{2}}{4D\ln N}\quad\text{as }N\to\infty,
\end{align}
where $L>0$ is the distance from the initial searcher locations to the target. The asymptotic behavior \eqref{result} was extended to various effectively 1d domains in \cite{yuste1996, yuste2000, yuste2001, van2003, redner2014, meerson2015} and extended to certain higher dimensional domains in \cite{ro2017, basnayake2019, lawley2019esp}. In \cite{lawley2019uni}, classical results in large deviation theory were used to show that \eqref{result} holds in great generality (including diffusions in $\R^{d}$ with space-dependent diffusivities and force fields and diffusions on $d$-dimensional Riemannian manifolds containing reflecting obstacles). In \cite{lawley2019gumbel}, extreme value theory was applied to the case of diffusive searchers to find the asymptotic probability distribution of $T_{N}$ and to find higher order corrections to \eqref{result}. In particular, the short time behavior of the distribution of a single diffusive FPT was used to determine the distribution of $T_{N}$ for large $N$ \cite{lawley2019gumbel}. The basic approach of the present work follows that of \cite{lawley2019gumbel}, except that the short time distribution of a PDMP FPT differs markedly from that of a diffusive FPT.

As described in the Introduction section, the fact that the fastest FPT of diffusive searchers vanishes for large $N$ is at odds with the fact that the time it takes searchers moving at finite speed to find a target is bounded above zero. In particular, run and tumble searchers with speed $v>0$ and tumbling rate $\lambda>0$ cannot reach a target distance $L>0$ away in a time less than $\ts:=L/v>0$. In what parameter regime can the extreme FPT of run and tumble processes be approximated by an extreme FPT of diffusion? Combining \eqref{result} with the bound $T_{N}\ge\ts$ suggests that a valid approximation requires that $\ts\ll L^{2}/(4D\ln N)$, which means
\begin{align}\label{condition}
\frac{D\ln N}{vL}\ll 1.
\end{align}
Upon identifying $D$ with $v^{2}/\lambda$, the requirement \eqref{condition} becomes
\begin{align}\label{condition2}
\frac{v\ln N}{\lambda L}\ll 1.
\end{align}
Therefore, if \eqref{condition2} is violated, we expect that run and tumble extreme FPTs that are calculated using the diffusion approximation are not valid. Modeling the motion of bacteria by run and tumble, approximate parameter values are $v\approx 10\,\mu\text{m}\,\text{s}^{-1}$ and $\lambda\approx 1\,\text{s}^{-1}$ \cite{berg1972}, which implies that \eqref{condition2} is violated for lengthscales $L<10^{3}\,\mu\text{m}$ and $N>10^{2}$. Similarly, approximate parameter values for modeling molecular motor transport by run and tumble are $v\approx 10^{-1}\,\mu\text{m}\,\text{s}^{-1}$ and $\lambda\approx 1\,\text{s}^{-1}$ \cite{newby2010}, which implies that \eqref{condition2} is violated for lengthscales $L<10\,\mu\text{m}$ and $N>10^{2}$.

Finally, it is interesting to discuss our results on the 3d run and tumble process in the context of sperm cells searching for the oocyte in human fertilization (as described in the Introduction). The distance a sperm cell must travel to find the oocyte is estimated at $L=10^{5}\,\mu\text{m}$ and they swim at an estimated speed of $v=75\,\mu\text{m}\,\text{s}^{-1}$ \cite{yang2016}. In \emph{in vivo} movies, sperm cell ``motions were mostly ballistic for distances of millimeters, as long as they do not encounter any obstacle'' \cite{yang2016}. Supposing that this ballistic motion distance is $l\in[4,5]\,\text{mm}$, we estimate that $\lambda$ is in the range
\begin{align}\label{lambdarange}
\lambda=\frac{v}{l}\in[0.015,0.01875]\,\text{s}^{-1}.
\end{align}
In Figure~\ref{fig5app}, we plot the formula that we derived in section~\ref{section 3d},
\begin{align*}
\E[T_{N}]
&\approx\frac{L}{v}\left[1+\Big(\frac{(1-e^{-{(\lambda L/v)}})^{N+1}}{{(\lambda L/v)} e^{-{(\lambda L/v)}} }\Big)\frac{1}{N\ln N}\right],
\end{align*}
for $\lambda$ in the range \eqref{lambdarange}. The solid blue curve is for the typical number of sperm cells, $N=3\times10^{8}$, and the dotted red curve is for the reduction $N=\frac{1}{4}(3\times10^{8})$. The dashed black line is the fastest possible search time of $L/v\approx20$ minutes, which is in the physiological timescale of tens of minutes \cite{khurana2018}. Of course, fertilization is incredibly complex \cite{eisenbach2006} and the process in section~\ref{section 3d} is a woefully crude idealization. Nevertheless, it is interesting that for typical physiological parameters, the value of $\E[T_{N}]$ is in the correct timescale (solid blue curve in Figure~\ref{fig5app}). It is also interesting that merely reducing $N$ by a factor of four can significantly increase the search time (red dotted curve in Figure~\ref{fig5app}), given that such a reduction may cause infertility \cite{bensdorp2007}. More broadly, this simple calculation emphasizes the importance of the number of searchers in determining FPTs and that a very large number of searchers ($\sim10^{8}$) may be necessary in certain biophysical processes. 

%%%%%%%%%%%%%%%%%%%%%%%%%%%%%%%%%%%
\begin{figure}[t]%[htp]
\centering
\includegraphics[width=.5\linewidth]{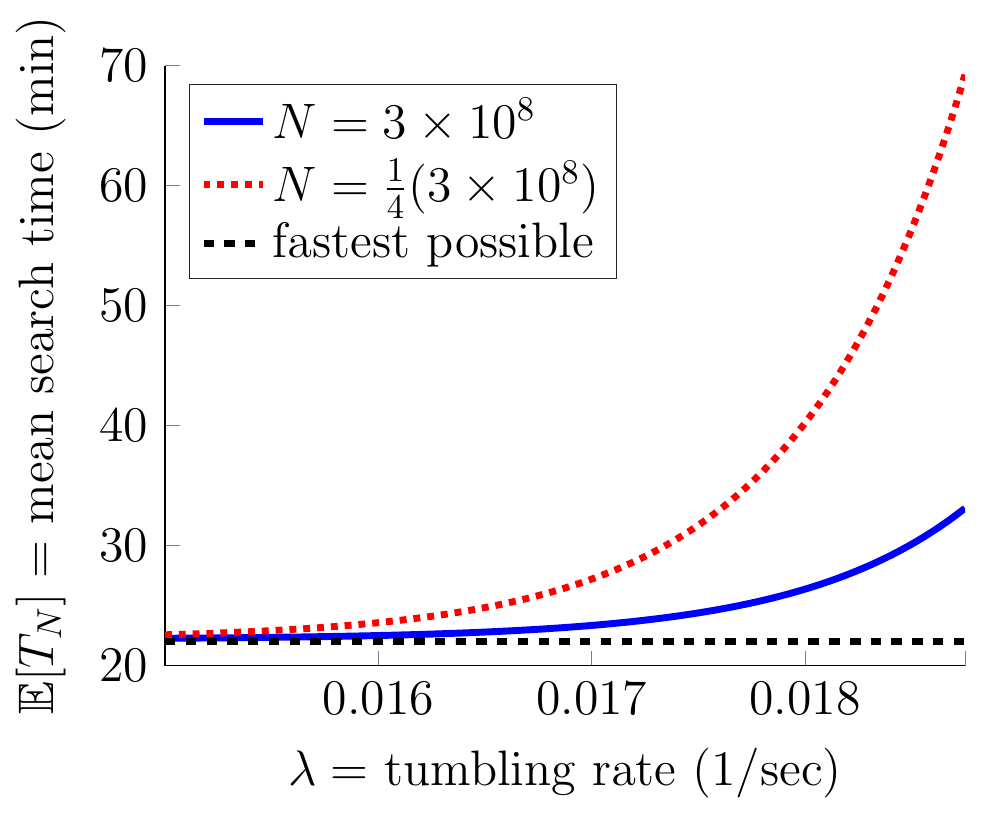}
\caption{Expected search time for 3d run and tumble with parameters estimated for human fertilization. See the text for details.}
\label{fig5app}
\end{figure}
%%%%%%%%%%%%%%%%%%%%%%%%%%%%%%%%%%%

%%%%%%%%%%%%%%%%%%%%%%%%%%%%%%%%%%%
\section{Appendix}

The appendix is divided into three sections. In section~\ref{proofs}, we give the proofs of the propositions and theorems of section~\ref{math}. In section~\ref{review}, we review the diffusion approximation for 1d run and tumble processes described in the Introduction. In section~\ref{algo}, we describe the stochastic simulation algorithm used in sections~\ref{section 1d}-\ref{section 4B}.

%%%%%%%%%%%%%%%%%%%%%%%%%%%%%%%%%%%
\subsection{Proofs}\label{proofs}

%%%%%%%%%%%%%%%%%%%%%%%%%%%%%%%%%%%%%%%%
\begin{proof}[Proof of Proposition~\ref{basic}]
This proposition presents basic facts about Weibull random variables which are straightforward to show using \eqref{xweibull}.
\end{proof}
%%%%%%%%%%%%%%%%%%%%%%%%%%%%%%%%%%%%%%%%

%%%%%%%%%%%%%%%%%%%%%%%%%%%%%%%%%%%%%%%%
\begin{proof}[Proof of Theorem~\ref{main}]
In light of \eqref{tilde}, define the sequence of random variables $\{X_{n}\}_{n\ge1}$ by $X_{n}=(\ts-\taut_{n})/\ts$, where the distribution of $\taut_{n}$ is given in \eqref{ccdf}. Therefore, for $-1\ll x<0$ we have
\begin{align}\label{F}
\begin{split}
F(x)
&:=\P(X_{n}\le x)
=\P(\taut_{n}\ge\ts(1-x))\\
&=1-\alpha(\chi+(1-\chi)\ln(-1/x))(-x)^{p}\\
&\quad+o((\chi+(1-\chi)\ln(-1/x))(-x)^{p})\quad\text{as }x\to0-,
\end{split}
\end{align}
where $\chi=1$ if \eqref{short} holds and $\chi=0$ if \eqref{shortlog} holds.
Hence, if $y>0$, then \eqref{F} implies
\begin{align*}
\lim_{t\to0+}\frac{1-F(-ty)}{1-F(-t)}
=y^{p}.
\end{align*}
Therefore, Theorem 1.2.1 and Corollary 1.2.4 in \cite{haanbook} imply that
\begin{align*}
M_{N}:=\max\{X_{1},\dots,X_{N}\}
=-\min\{-X_{1},\dots,-X_{N}\}
=\frac{-(\Tt_{N}-t_{0})}{t_{0}}
\end{align*}
satisfies
\begin{align}\label{pcd}
\frac{-M_{N}}{a_{N}}
\to_{\dist}\textup{Weibull}(1,p)\quad\text{as }N\to\infty,
\end{align}
where $a_{N}>0$ satisfies 
\begin{align*}
\alpha(\chi+(1-\chi)\ln(1/a_{N}))(a_{N})^{p}=1/N\quad\text{for sufficiently large $N$}.
\end{align*}
Solving this equation for $a_{N}$ yields
\begin{align*}
a_{N}
:=\begin{cases}
(\alpha N)^{-1/p} & \text{if \eqref{short} holds},\\
\big(-\alpha N W_{-1}(-p/(N\alpha))/p\big)^{-1/p} & \text{if \eqref{shortlog} holds},
\end{cases}
\end{align*}
where $W_{-1}(\cdot)$ denotes the lower branch of the LambertW function \cite{corless1996}.

Now, it is straightforward to check that if \eqref{pcd} holds for a sequence $\{a_{N}\}$, then it also holds for any sequence $\{a_{N}'\}$ satisfying
\begin{align*}
\lim_{N\to\infty}a_{N}/a_{N}'=1.
\end{align*}
Therefore, in the case that \eqref{shortlog} holds, it follows from the asymptotic behavior of the LambertW function \cite{corless1996} that we may take
\begin{align*}
\alpha_{N}
=(\alpha N\ln(N)/p)^{-1/p}.%\quad\text{if \eqref{shortlog} holds}.
\end{align*}
Defining $\Sigma_{N}:=-M_{N}/a_{N}$ completes the proof.
\end{proof}
%%%%%%%%%%%%%%%%%%%%%%%%%%%%%%%%%%%%%%%%

%%%%%%%%%%%%%%%%%%%%%%%%%%%%%%%%%%%%%%%%
\begin{proof}[Proof of Theorem~\ref{moments}]
We have assumed that $\E[T_{N}]<\infty$ for some $N\ge1$. Thus, if $m\in(0,1)$, then $\E[(T_{N})^{m}]\le1+\E[T_{N}]<\infty$. If $m>1$, then it is straightforward to show that (see for example the proof of Proposition 2 in \cite{lawley2019uni})
\begin{align*}
\E[(T_{2^{m-1}N})^{m}]<\infty.
\end{align*}
Since Weibull random variables have finite moments, we complete the proof by applying Theorem 2.1 in \cite{pickands1968}. We note that rather than appealing to  \cite{pickands1968}, we could prove this theorem using the argument in the proof of Theorem~\ref{kth moment} below.
\end{proof}
%%%%%%%%%%%%%%%%%%%%%%%%%%%%%%%%%%%%%%%%

%%%%%%%%%%%%%%%%%%%%%%%%%%%%%%%%%%%%%%%%
\begin{proof}[Proof of Proposition~\ref{basicg}]
This proposition presents basic facts about generalized Gamma random variables which are straightforward to show using \eqref{gGamma}.
\end{proof}
%%%%%%%%%%%%%%%%%%%%%%%%%%%%%%%%%%%%%%%%

%%%%%%%%%%%%%%%%%%%%%%%%%%%%%%%%%%%%%%%%
\begin{proof}[Proof of Theorem~\ref{kth}]
Define
\begin{align*}
\xi_{j,N}
:=\begin{cases}
1 & \text{if }T_{j,N}=\ts\text{ and }T_{j+1,N}>\ts,\\
0 & \text{otherwise},
\end{cases}
\end{align*}
where $T_{0,N}:=\ts=:T_{N+1,N}$. Define
\begin{align*}
\Tt_{k,N}
:=\min\big\{\{\taut_{1},\dots,\taut_{N}\}\backslash\cup_{j=1}^{k-1}\{\Tt_{j,N}\}\big\},\quad k\in\{1,\dots,N\},
\end{align*}
where $\Tt_{1,N}:=\Tt_{N}$. Then
\begin{align*}
T_{k,N}
=_{\dist}
\sum_{j=k}^{N}\xi_{j,N}\ts
+\sum_{j=0}^{k-1}\xi_{j,N}\Tt_{k-j,N}
=\ts
+\sum_{j=0}^{k-1}\xi_{j,N}(\Tt_{k-j,N}-\ts)
\end{align*}
since $\sum_{j=0}^{N}\xi_{j,N}=1$ almost surely. Defining $\Sigma_{k-j,N}:=(\Tt_{k-j,N}-\ts)/(\ts a_{N})$, the convergence in distribution in \eqref{cd1} follows immediately from Theorem~\ref{main} above and Theorem 3.5 in \cite{colesbook}.
\end{proof}
%%%%%%%%%%%%%%%%%%%%%%%%%%%%%%%%%%%%%%%%

%%%%%%%%%%%%%%%%%%%%%%%%%%%%%%%%%%%%%%%%
\begin{proof}[Proof of Theorem~\ref{kth moment}]
If a sequence of uniformly integrable random variables converges in distribution, then the moments of the random variables also converge \cite{billingsley2013}. Therefore, we need only prove that
\begin{align}\label{suffc}
\sup_{N}\E[(\Sigma_{k,N})^{2}]
=\sup_{N}\E\Big[\Big(\frac{\Tt_{k,N}-\ts}{\ts a_{N}}\Big)^{2}\Big]<\infty,
\end{align}
since \eqref{suffc} implies that the sequence $\{\frac{T_{k,N}-b_{N}}{a_{N}}\}_{N}$ is uniformly integrable \cite{billingsley2013}.

Letting
\begin{align*}
S(x)
:=\P\Big(\frac{\taut_{n}-\ts}{\ts}>x\Big),
\end{align*}
it follows by definition of $\Tt_{k,N}$ that
\begin{align*}
\P\Big(\frac{\Tt_{k,N}-\ts}{\ts}>x\Big)
=\sum_{j=0}^{k-1}{N\choose j}(1-S(x))^{j}(S(x))^{N-j}.
\end{align*}
Therefore,
\begin{align*}
\E\Big[\Big(\frac{\Tt_{k,N}-\ts}{\ts}\Big)^{2}\Big]
&=\int_{0}^{\infty}\P\Big(\frac{\Tt_{k,N}-\ts}{\ts}>\sqrt{x}\Big)\,\dd x\\
&=\sum_{j=0}^{k-1}{N\choose j}\int_{0}^{\infty}(1-S(\sqrt{x}))^{j}(S(\sqrt{x}))^{N-j}\,\dd x.
\end{align*}
Using \eqref{short}-\eqref{shortlog} and standard integral estimates ensures that \eqref{suffc} holds, which completes the proof.
\end{proof}
%%%%%%%%%%%%%%%%%%%%%%%%%%%%%%%%%%%%

%%%%%%%%%%%%%%%%%%%%%%%%%%%%%%%%%%%
\subsection{Run and tumble diffusion approximation}\label{review}

For the 1d run and tumble described in the Introduction section, the probability density for its position, $p(x,t)$, can be decomposed into the densities of right-moving and left-moving searchers,
\begin{align*}
p(x,t)=p_{+}(x,t)+p_{-}(x,t),
\end{align*}
where $p_{\pm}$ satisfy the advection-reaction equations,
\begin{align*}
\frac{\partial}{\partial t}p_{+}+v\frac{\partial}{\partial x}p_{+}(x,t)
&=-\lambda (p_{+}-p_{-})\\
\frac{\partial}{\partial t}p_{-}-v\frac{\partial}{\partial x}p_{-}(x,t)
&=\lambda (p_{+}-p_{-}).
\end{align*}
Upon an algebraic manipulation and cross-differentiation of these equations, one obtains
\begin{align}\label{pp}
\frac{\partial^{2}}{\partial t^{2}}p
+2\lambda \frac{\partial}{\partial t}p
=v^{2}\frac{\partial^{2}}{\partial x^{2}}p.
\end{align}
If $\xc>0$ and $\tc>0$ denote respectively some lengthscale and timescale of interest, then nondimensionalizing \eqref{pp} in terms of $\overline{x}:=x/\xc$ and $\overline{t}:=t/\tc$ and rearranging yields
\begin{align*}
\frac{\partial}{\partial \overline{t}}p
=\frac{v^{2}\tc}{2\lambda \xc^{2}}\frac{\partial^{2}}{\partial \overline{x}^{2}}p
-\frac{1}{2\lambda\tc}\frac{\partial^{2}}{\partial \overline{t}^{2}}p.
\end{align*}
Assuming $\tc\gg1/\lambda$, $\xc=\O(\sqrt{(v^{2}/\lambda) \tc})$, and neglecting $\O(1/(\lambda \tc))$ terms from the equation then yields \eqref{FPE} in dimensional variables $x$ and $t$.

%%%%%%%%%%%%%%%%%%%%%%%%%%%%%%%%%%%
\subsection{Stochastic simulation algorithm}\label{algo}

We now describe the statistically exact stochastic simulation algorithm used in sections~\ref{section 1d}-\ref{section 4B}. Since $\tau=\ts$ with probability $q$, we describe our stochastic simulation algorithm to generate samples of $\tau$ conditioned that $\tau>\ts$ (the case $\tau=\ts$ is trivial). As above, we denote this conditional FPT by $\taut$ and note that it is defined by
\begin{align*}
\P(\taut<t)
=\frac{\P(\ts<\tau<t)}{1-q}.
\end{align*}

First consider the run and tumble processes in sections~\ref{section 1d}-\ref{section 3d}. For the 1d case in section~\ref{section 1d}, consider the problem of escape from an interval described in section~\ref{section interval} with $L_{0}=L_{1}=v_{0}=v_{1}=1$. For these run and tumble processes in 1d, 2d, and 3d, note that the initial direction is unimportant by symmetry. Further, since we are simulating the conditioned FPT $\taut$, the first jump time $s_{1}>0$ must occur before time $\ts=1$. Hence, the distribution of $s_{1}$ is exponential conditioned that $s_{1}<\ts$. That is, 
\begin{align}\label{s1}
\P(s_{1}>t)
=\begin{cases}
\frac{e^{-\lambda t}-e^{-\lambda \ts}}{1-e^{-\lambda \ts}} & \text{if }t>\ts,\\
0 & \text{if }t\le\ts.
\end{cases}
\end{align}
To generate a realization of $s_{1}$, we set $s_{1}=-\ln(U(1-e^{-\lambda\ts})+e^{-\lambda\ts})/\lambda$ where $U$ is uniformly distributed on $[0,1]$. We then generate realizations of the switching times and random directions until the process leaves a unit ball, which we then record as a single realization of $\taut$.

The stochastic simulation algorithm for the linear PDMP in section~\ref{section 4B} is very similar, except we also need to generate the random initial condition of the jump process $J$. If $p_{0},p_{1}$ denote the unconditioned probabilities, $p_{j}:=\P(J(0)=j)$, then the conditioned probabilities are
\begin{align*}
\widetilde{p}_{1}
&:=\P(J(0)=1\,|\,\tau>\ts)
=\frac{\P(J(0)=1,\tau>\ts)}{\P(\tau>\ts)}
=\frac{p_{1}}{p_{1}+p_{0}(1-e^{-\lambda \ts})},\\
\widetilde{p}_{0}
&:=\P(J(0)=1\,|\,\tau>\ts)=1-\widetilde{p}_{1}.
\end{align*}
Thus, to generate a realization of $\taut$, we first generate the initial condition $J(0)=j$ with probability $\widetilde{p}_{j}$. Then, if $J(0)=1$, then we generate the first switching time $s_{1}$ as an exponential random variable with rate $\lambda$. If $J(0)=0$, then the process must switch before time $\ts$, and so we generate $s_{1}$ as in \eqref{s1} above. We then continue to generate switching times until the process crosses the threshold $\theta$, which we then record as a single realization of $\taut$.

For each PDMP, we generate $M=10^{8}$ realizations of $\taut$, which we denote by $\{\taut_{1},\dots,\taut_{M}\}$. Then, for a given value of $N\ge1$, we obtain $K=\floor*{M/N}$ iid realizations of $\Tt_{N}$ (where $\floor*{\cdot}$ denotes the floor operator) denoted by $\Tt_{N}^{(1)},\dots,T_{N}^{(K)}$ by defining
\begin{align*}
\Tt_{N}^{(k)}
:=\min\{\taut_{(k-1)N},\taut_{(k-1)N+1},\dots,\taut_{kN}\},\quad k\in\{1,\dots,K\}.
\end{align*}
From these realizations of $\Tt_{N}$, we then compute the statistics (mean and empirical probability densities) used in Figures~\ref{fig1d}, \ref{fig2d}, \ref{fig3d}, and \ref{fig4B}.

%%%%%%%%%%%%%%%%%%%%%%%%%%%%%%%%%%%%%%%%%%%%%%%%%%%%%%%%%%%%%%%%%%%%%%%%%%%%%%%%%%%%%%%%%%%%%%%%%%%%%%%%%%%%%%%%%%%%%%%%%%%%%%%%%%%%%%%%%%%%%%%%%%%%%%%%%%%%%%%%%%%%%%%%%%%%%%%
% Create the reference section using BibTeX:
\bibliography{library.bib}
\bibliographystyle{unsrt}

\end{document}